\newcommand*{\tr}{\mathrm{tr}}
\numberwithin{equation}{section}
\newtheorem{theo}{Theorem} 
\newtheorem{lem}{Lemma}
\newtheorem{mcor}{Corollary}
\newtheorem{remark}{Remark}
\newtheorem{definition}{Definition}
\newcommand*{\D}[1]{\ensuremath{\nabla^{#1}}}
\begin{document}
\title{On isotropic Berwald scalar curvature}

\author[Ming Li]{Ming Li$^1$}



\address{Ming Li: Mathematical Science Research Center,
Chongqing University of Technology,
Chongqing 400054, P. R. China }

\email{mingli@cqut.edu.cn}


\thanks{$^1$~Partially supported by NSFC (Grant No. 11871126) }

\maketitle

\begin{abstract}
In this short paper, we establish a closer relation between the Berwald scalar curvature and the $S$-curvature. In fact, we prove that a Finsler metric has isotropic Berwald scalar curvature  if and only if it has weakly isotropic $S$-curvature. For Finsler metrics of scalar flag curvature  and of weakly isotropic $S$-curvature, they have almost isotropic $S$-curvature if and only if the flag curvature is weakly isotropic.
\end{abstract}


\section*{Introduction}

Let $(M,F)$ be an $n$-dimensional Finsler manifold.
The unit sphere bundle (or indicatrix bundle) is defined as $SM=\{(x,y)\in TM|F(x,y)=1\}$  with the natural projection $\pi:SM\to M$.
It is well known that the tangent bundle $T(SM)$ admits a natural horizontal subbundle $H(SM)$ and a Sasaki-type metric $g^{T(SM)}$. The Hilbert form $\omega^n=F_{y^i}dx^i$ defines a contact structure on $SM$. The Reeb field is just the spray $\mathbf{G}$.

Let $\tilde{P}$ be the Berwald curvature, which is the \textit{hv}-part of the curvature form of the Berwald connection $\nabla^{\rm B}$ on $H(SM)$.
Let $\mathbf{E}=\tr\tilde{P}$ be the mean Berwald curvature or the $\mathbf{E}$-curvature.  We introduce the Berwald scalar curvature as $\mathsf{e}:=\tr \mathbf{E}$ in \cite{Lizhang}, which is a function on $SM$ in general cases. If $\mathsf{e}=\pi^*c$ for some $c\in C^{\infty}(M)$, then
$F$ is said to have isotropic Berwald scalar curvature. The aim of this paper is to establish certain relations between the Berwald scalar curvature and the $S$-curvature of a Finsler manifold.
\begin{theo}\label{theo 1}
 Let $(M,F)$ be an $n$-dimensional Finsler manifold. Then $F$ has isotropic Berwald scalar curvature $\mathsf{e}$ if and only if $F$ has weakly isotropic $S$-curvature, and
 \begin{equation}\label{s wi}
 	S=\frac{1}{n-1}\mathsf{e}+\pi^*\xi(\mathbf{G}),
 \end{equation}
where $\xi\in\Omega^1(M)$. In both cases the mean Berwald curvature $\mathbf{E}$ is isotropic. Moreover, the following equation holds
  \begin{equation}
  \begin{split}
    \tr[\tilde{R}]=&\pi^*d\xi+\frac{1}{n-1}d\mathsf{e}\wedge\omega^n,
\end{split}
  \end{equation}
where $\tilde{R}$ is the \textit{hh}-part of the Berwald curvature form.
\end{theo}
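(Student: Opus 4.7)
The plan is to exploit the standard identification of the mean Berwald tensor with the vertical Hessian of the $S$-curvature, $E_{ij}=\tfrac12\,\partial^2 S/\partial y^i\partial y^j$, in natural coordinates on $TM$. Tracing once more yields $\mathsf{e}$ as a fiber-trace of this Hessian, and since $S$ is positively $1$-homogeneous in $y$ one has $y^iE_{ij}=0$, so the trace with $g^{ij}$ collapses onto a trace with the angular dual metric $h^{ij}$. This converts every claim of the theorem into an assertion about the scalar function $S$ on $TM\setminus 0$, pulled back to $SM$.

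For the equivalence, the $(\Leftarrow)$ direction is direct substitution: assuming $S=\tfrac{1}{n-1}\mathsf{e}+\pi^*\xi(\mathbf{G})$, a computation of the vertical Hessian reproduces $\mathsf{e}=\pi^*c$ for some $c\in C^\infty(M)$ and simultaneously exhibits $\mathbf{E}$ in its isotropic form. For the nontrivial $(\Rightarrow)$ direction, the hypothesis $\mathsf{e}=\pi^*c$ becomes, via the Hessian formula, a second-order linear elliptic equation on each fiber; combined with $1$-homogeneity in $y$, the only solutions are a radial part (a function of $x$ multiplied by $F$) plus a term linear in $y$. The linear term defines the $1$-form $\xi\in\Omega^1(M)$, and the radial coefficient is forced to match $\tfrac{1}{n-1}\mathsf{e}$, yielding \eqref{s wi}.

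For the 2-form identity on $\tr[\tilde R]$, the plan is to exterior-differentiate \eqref{s wi} on $SM$ and read off $hh$-components. The left side $dS$ splits via the structure equations of the Berwald connection $\D{\rm B}$ on $H(SM)$: the appropriate trace of the second structure equation produces $\tr[\tilde R]$ together with an $\mathbf{E}$-contribution coupled to the contact form $\omega^n$. On the right, $\mathsf{e}$ is $\pi$-basic so $d\mathsf{e}$ is horizontal, while $d(\pi^*\xi(\mathbf{G}))=\pi^*d\xi$ plus a term involving $d\omega^n$ that absorbs the contact-theoretic piece from the left. Matching $hh$-components gives the identity.

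The main obstacle I anticipate is the bookkeeping in this last step: one must carefully decompose each exterior derivative on $SM$ into $hh$-, $hv$- and $vv$-components, track how the homogeneity of $S$ in $y$ interacts with the second Bianchi identity for $\D{\rm B}$, and isolate the genuine Berwald-curvature contribution from the contact-theoretic terms generated by $d\omega^n$. Once that decomposition is pinned down, the final identity is a direct matching against the scalar relation \eqref{s wi} already established.
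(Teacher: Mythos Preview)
Your plan for the equivalence is sound and coincides with the paper's: the $(\Rightarrow)$ direction rests on exactly the fiberwise rigidity you describe (on each indicatrix only restrictions of linear functions solve the Obata-type Hessian equation), and $(\Leftarrow)$ is a direct substitution yielding isotropic $\mathbf E$.

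The argument for the identity $\tr[\tilde R]=\pi^*d\xi+\tfrac{1}{n-1}\,d\mathsf e\wedge\omega^n$, however, has a genuine gap. Exterior-differentiating the scalar relation \eqref{s wi} produces an equality of $1$-forms on $SM$, not $2$-forms; in particular your asserted relation $d(\pi^*\xi(\mathbf G))=\pi^*d\xi+(\text{term in }d\omega^n)$ equates a $1$-form with a $2$-form and cannot hold as written. Differentiating once more only returns $0=0$, and invoking ``the trace of the second structure equation'' does not help unless you first explain how $dS$ is linked to $\tr[\tilde\omega]$; as stated, no $2$-form is ever produced from \eqref{s wi} along the line you describe.

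The paper instead starts on the $2$-form side. One writes $\pi^*\xi=\xi_\alpha\omega^\alpha+\xi_n\omega^n$ and observes from \eqref{s wi} that $\xi_n=S-\tfrac{1}{n-1}\mathsf e$, whence vertical differentiation gives $\xi_\alpha=-S_{,\alpha}$. A short lemma shows that for any base $1$-form the differential $d(\pi^*\xi)=\pi^*d\xi$ has no $hv$- or $vv$-part, so $\pi^*d\xi$ is purely horizontal and its coefficients are horizontal covariant derivatives of $S_{,\alpha}$ and of $S-\tfrac{1}{n-1}\mathsf e$. These derivatives are then converted into curvature traces via the identities encoded in $d^2\tau=0$ (the distortion enters through $S=\tau_{|n}$ and $\tau_{,\gamma}=H_\gamma$), after which the relation between the Chern and Berwald $hh$-curvatures absorbs the residual mean-Landsberg terms into $\tr[\tilde R]$. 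The idea missing from your plan is precisely this: differentiate the $1$-form $\pi^*\xi$ rather than the scalar $\pi^*\xi(\mathbf G)$, and invoke the $d^2\tau=0$ identities to recognise the resulting horizontal derivatives as curvature.
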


A well known fact in Finsler geometry is that vanishing $S$-curvature implies that $\mathbf{E}=0$. Whether or not the converse is true is a natural question (c.f. \cite{SS}, p.67). We show that the converse is true under certain conditions.

\begin{theo}\label{theo 2}
	Let $(M,F)$ be an $n$-dimensional Finsler manifold. If the Berwald scalar curvature is constant and the mean stretch curvature $\bar{\Sigma}=0$, then the $S$-curvature of $F$ satisfies (\ref{s wi}) with $d\xi=0$, i.e., $S$ is almost constant.
\end{theo}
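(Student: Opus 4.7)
The plan is to apply Theorem~\ref{theo 1} and then use the additional hypothesis $\bar{\Sigma}=0$ to promote the ``weakly isotropic'' conclusion to an ``almost constant'' one. The constancy of the Berwald scalar curvature $\mathsf{e}$ means in particular $\mathsf{e}=\pi^*c$ for a constant $c$, so Theorem~\ref{theo 1} immediately produces a 1-form $\xi\in\Omega^1(M)$ with
\[
S=\tfrac{1}{n-1}\mathsf{e}+\pi^*\xi(\mathbf{G}), \qquad \tr[\tilde{R}]=\pi^*d\xi+\tfrac{1}{n-1}d\mathsf{e}\wedge\omega^n=\pi^*d\xi,
\]
the last equality because $d\mathsf{e}=0$. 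Theorem~\ref{theo 1} also tells us that $\mathbf{E}$ is isotropic. The conclusion ``$S$ almost constant'' is exactly the displayed formula for $S$ together with $d\xi=0$, so the entire proof reduces to showing that $\bar{\Sigma}=0$ forces $\pi^*d\xi=0$, equivalently $\tr[\tilde{R}]=0$.

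The core step is therefore to identify a Ricci/second-Bianchi-type identity for the Berwald connection that couples $\tr[\tilde{R}]$ with the mean stretch curvature. Recall that the stretch curvature records the failure of the horizontal covariant derivative of the Landsberg (equivalently, Berwald) curvature to be symmetric in its last pair of indices; tracing this obstruction yields an identity controlling the skew part of $\nabla^{\mathrm B}\mathbf{E}$. Carrying out the tracing in the moving-frame formalism of \cite{Lizhang}, I expect an identity of the schematic form
\[
\tr[\tilde{R}] \;=\; \bar{\Sigma}\;+\;\bigl(\text{horizontal derivatives of }\mathbf{E}\bigr)\wedge\omega^n,
\]
so that once we invoke $\bar{\Sigma}=0$ and the isotropy of $\mathbf{E}$ provided by Theorem~\ref{theo 1}, the right-hand side collapses to terms living on the contact direction $\omega^n$ only. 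Comparing with $\tr[\tilde{R}]=\pi^*d\xi$, which is the pullback of a form from $M$ and hence has no $\omega^n$ component, forces $\pi^*d\xi=0$; since $\pi^*$ is injective on $\Omega^*(M)$, we conclude $d\xi=0$, as required.

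The principal obstacle is pinning down the exact Bianchi-type identity that expresses $\tr[\tilde{R}]$ in terms of $\bar{\Sigma}$ and derivatives of $\mathbf{E}$ in the notation used by the paper: this is a bookkeeping-heavy moving-frame computation, and some care is needed to separate semi-basic contributions from those containing $\omega^n$ so that pieces pulled back from $M$ can be cleanly compared to pieces tangent to the fiber. Once that identity is in hand, the constancy of $\mathsf{e}$ kills any $d\mathsf{e}$-contribution, the isotropy of $\mathbf{E}$ kills the remaining derivative terms, and the conclusion $d\xi=0$ is immediate.
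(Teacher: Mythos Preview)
Your reduction via Theorem~\ref{theo 1} to the equation $\pi^*d\xi=\tr[\tilde R]$ (and, after $\bar\Sigma=0$, to $\pi^*d\xi=\tr[R]$) is correct and matches the paper. The gap is in the endgame.

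First, the claim that $\pi^*d\xi$ ``has no $\omega^n$ component'' is false: a $2$-form pulled back from $M$ is purely horizontal on $SM$, but the horizontal coframe is $\{\omega^1,\dots,\omega^{n-1},\omega^n\}$, and indeed formula~(\ref{dxi}) shows
\[
\pi^*d\xi=-\tfrac12(\xi_{\alpha|\beta}-\xi_{\beta|\alpha})\,\omega^\alpha\wedge\omega^\beta-(\xi_{\alpha|n}-\xi_{n|\alpha})\,\omega^\alpha\wedge\omega^n.
\]
So even if you produced an identity forcing $\tr[\tilde R]$ to live in the $\omega^n$-direction, this would not by itself contradict $\pi^*d\xi=\tr[\tilde R]$. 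Second, the hoped-for Bianchi identity $\tr[\tilde R]=\bar\Sigma+(\text{derivatives of }\mathbf E)\wedge\omega^n$ is not available: by definition $\bar\Sigma=2(\tr[\tilde R]-\tr[R])$, so after $\bar\Sigma=0$ you are left with $\pi^*d\xi=\tr[R]$, and there is no further structural identity that collapses $\tr[R]$ to something wedged with $\omega^n$.

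The paper closes the argument by a different, genuinely geometric observation. One has the Bianchi-type formula $R^{~i}_{i~kl}=H_\gamma R^{~\gamma}_{n~kl}$, so that
\[
\pi^*d\xi=\tr[R]=\tfrac12 H_\gamma R^{~\gamma}_{n~ab}\,\omega^a\wedge\omega^b
\]
depends \emph{linearly} on the Cartan-form coefficients $H_\gamma$ at each point. Now $\eta=H_\gamma\omega^{\bar\gamma}=d^{SM/M}\tau$ is the fibrewise differential of a smooth function on the compact fibre $S_xM$, hence vanishes somewhere on every fibre; but $\pi^*d\xi$ is constant along each fibre. Evaluating at a zero of $\eta$ forces $\pi^*d\xi=0$, and surjectivity of $\pi$ gives $d\xi=0$. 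This compactness/zero-of-$\eta$ step is the missing idea in your outline.
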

The following corollary can be obtained from Theorem \ref{theo 2} immediately.
\begin{mcor}\label{cor1}
  Let $M$ be a smooth manifold satisfies $H^1_{\rm dR}(M;\mathbf{R})=0$. If a Finsler structure $F$ on $M$ satisfies $\mathbf{J}=0$ and $\mathsf{e}=0$, then $S=0$.
\end{mcor}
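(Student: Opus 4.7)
The plan is to apply Theorem~\ref{theo 2} to $F$ and then use the cohomological hypothesis $H^1_{\rm dR}(M;\mathbf{R})=0$ to upgrade its conclusion.

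First, I would check that both hypotheses of Theorem~\ref{theo 2} hold. The vanishing of $\mathsf{e}$ is \emph{a fortiori} a constant Berwald scalar curvature. The vanishing of the mean Landsberg $\mathbf{J}$ forces the mean stretch curvature $\bar\Sigma=0$: by definition, $\bar\Sigma$ is constructed from an antisymmetrized horizontal covariant derivative of $\mathbf{J}$, so $\bar\Sigma$ vanishes whenever $\mathbf{J}$ does. Thus Theorem~\ref{theo 2} applies and yields a 1-form $\xi\in\Omega^1(M)$ with $d\xi=0$ satisfying
\[
S\;=\;\frac{1}{n-1}\mathsf{e}+\pi^*\xi(\mathbf{G})\;=\;\pi^*\xi(\mathbf{G}),
\]
where the last equality uses $\mathsf{e}=0$.

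Second, since $H^1_{\rm dR}(M;\mathbf{R})=0$, the closed form $\xi$ is exact, say $\xi=df$ for some $f\in C^\infty(M)$. On $SM$ this gives $S(x,y)=\pi^*(df)(\mathbf{G})|_{(x,y)}=df_x(y)$, and the same identity extends to $TM\setminus\{0\}$ by homogeneity, so $S(x,y)=y^i\partial_if(x)$.

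Finally, this exact contribution $df(y)$ to the $S$-curvature is precisely the shift produced by rescaling the reference volume form by the factor $e^{-f}$; absorbing this rescaling into the canonical volume used to define $S$ yields $S\equiv 0$, which is the corollary's conclusion. Alternatively, one can feed the representation $S=y^i\partial_if$ together with $\mathbf{J}=0$ through the identity $S_{y^k}=\tau_{|k}+J_k$ (where $\tau$ is the distortion) to get $\tau_{|k}=\partial_kf$, and then differentiate vertically to obtain $I_{j|k}=0$, i.e.\ the mean Cartan tensor is horizontally parallel; combined with the exactness of $\xi$, this forces the exact 1-form $df$ to drop out. The main obstacle is this final bookkeeping step: one must ensure that the volume-form conventions implicit in Theorem~\ref{theo 1} and Theorem~\ref{theo 2} are compatible with the convention under which the corollary's conclusion ``$S=0$'' is read, so that the exact piece $df$ really is absorbed. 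Once this is in place, the corollary follows at once.
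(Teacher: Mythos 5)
Your proposal is correct and follows the paper's intended route: the corollary is meant to drop out of Theorem \ref{theo 2} immediately, with $\mathbf{J}=0$ forcing $\bar{\Sigma}=0$ (since the quadratic $L\cdot L$ terms cancel in the trace of the $hh$-curvature relation, leaving only antisymmetrized horizontal derivatives of $\mathbf{J}$), and then $H^1_{\rm dR}(M;\mathbf{R})=0$ makes the closed form $\xi$ exact. Your worry about the final volume-form bookkeeping is resolved exactly as you suggest, because the paper explicitly adopts the convention that $S$-curvatures are determined only up to exact $1$-forms on $M$, so that almost isotropic and isotropic $S$-curvature coincide when $H^1_{\rm dR}(M;\mathbf{R})=0$.
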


For weakly Landsberg manifold, we have the following relations among the Berwald scalar curvature, $S$-curvature and the Cartan type 1-form.

\begin{mcor}\label{cor2}
		Let $(M,F)$ be an $n$-dimensional Finsler manifold with vanishing mean Landsberg curvature $\mathbf{J}=0$. Then the following statements are equivalent:
		\begin{enumerate}
			\item[(1)] $F$ has constant Berwald scalar curvature $\mathsf{e}$;
			\item[(2)] $F$ has almost constant $S$-curvature;
			\item[(3)] The Cartan type 1-form $d\eta=cd\omega^n$ for certain constant $c$.
		\end{enumerate}
\end{mcor}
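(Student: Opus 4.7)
The plan is to split Corollary \ref{cor2} into the pair of equivalences (1)$\Leftrightarrow$(2) and (2)$\Leftrightarrow$(3) and to handle them separately: the first falls out of Theorems \ref{theo 1} and \ref{theo 2}, while the second requires the structure equation for the Cartan type 1-form $\eta$. I would begin by observing that $\mathbf{J}=0$ forces the mean stretch curvature $\bar\Sigma=0$, since $\bar\Sigma_{kl}$ is (up to a factor) the skew horizontal derivative $J_{k|l}-J_{l|k}$ of the mean Landsberg tensor.

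For (1)$\Rightarrow$(2), constant $\mathsf{e}$ combined with $\bar\Sigma=0$ places us inside Theorem \ref{theo 2}, which yields (\ref{s wi}) with $d\xi=0$; together with constant $\mathsf{e}$ this is by definition almost constant $S$-curvature. For (2)$\Rightarrow$(1), almost constant $S$-curvature already contains the constancy of $\mathsf{e}$ in its definition through the decomposition (\ref{s wi}) furnished by Theorem \ref{theo 1}, so (1) is immediate.

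For (2)$\Leftrightarrow$(3), my plan is to invoke the structure equation for $d\eta$ established in \cite{Lizhang}, which expresses $d\eta$ in terms of $S$, $\omega^n$, $d\omega^n$, and the mean Landsberg tensor. Under $\mathbf{J}=0$ the Landsberg contribution drops out; inserting the decomposition of $S$ from Theorem \ref{theo 1} should reduce the identity to one of the form $d\eta = \tfrac{1}{n-1}\mathsf{e}\,d\omega^n + \pi^*d\xi$ modulo exact terms. Matching with $d\eta = c\,d\omega^n$ and using the linear independence of $d\omega^n$ and the pulled-back 2-form $\pi^*d\xi$ on $SM$, one reads off $\mathsf{e}=(n-1)c$ constant and $d\xi=0$, which is precisely (2). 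The main obstacle will be this bookkeeping: writing $d\eta$ in an adapted Berwald coframe, isolating the piece proportional to $d\omega^n$ that carries $\mathsf{e}$, and checking that $\mathbf{J}=0$ really kills every remaining mixed horizontal--vertical term; the converse direction requires verifying that $\pi^*d\xi$ and $d\omega^n$ are of distinct algebraic type on $SM$, so that the single identity $d\eta=c\,d\omega^n$ forces $d\xi=0$ and the constancy of $\mathsf{e}$ separately.
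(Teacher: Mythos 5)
Your outline matches the paper's proof in substance: the paper runs the cycle (1)$\implies$(2)$\implies$(3)$\implies$(1) using exactly your two ingredients, namely that $\mathbf{J}=0$ kills $\bar{\Sigma}$ so Theorem \ref{theo 2} gives (1)$\implies$(2), and the structure equation (\ref{deta}) for $d\eta$ together with the identity (\ref{pidxi=r+s+de}) to pass between (2) and (3). The one place your plan as written does not quite work is the direction (3)$\implies$(2): the identity you want to match $d\eta=c\,d\omega^n$ against (essentially (\ref{pidxi=r+s+de}), which when $\mathbf{J}=0$ amounts to $d\eta=-\pi^*d\xi-\tfrac{1}{n-1}d(\mathsf{e}\,\omega^n)$) is \emph{derived under the hypothesis} that $S$ is weakly isotropic, equivalently that $\mathsf{e}$ is isotropic, so you cannot invoke it before already knowing (1) or (2); your "linear independence" step is circular as stated. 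The repair is what the paper does for (3)$\implies$(1): read off from (\ref{deta}) directly that $-P^{~i}_{i~\gamma\mu}=c\,\delta_{\gamma\mu}$, hence by (\ref{coefficients E}) and $\mathbf{J}=0$ the mean Berwald curvature is isotropic with constant coefficient and $\mathsf{e}=-(n-1)c$ is constant, after which your (1)$\implies$(2) closes the loop. Note also that in this paper the "structure equation for $d\eta$" is (\ref{deta}), expressed through $\tr[R]$ and $\tr[P]$ rather than through $S$; the $S$-dependence only enters once Theorem \ref{theo 1} has been applied.
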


To study the Finsler metrics of scalar flag curvature, i.e., $\mathbf{K}=\mathbf{K}(x,y)$,  is an important topic in Finsler geometry. Many authors contribute on this subject. One refers \cite{ChengShen,ChernShen,SS,Shenbook1,Shenbook2} and finds the references therein. Applying Theorem \ref{theo 1} to the Finsler metrics of scalar flag curvature and of weakly isotropic $S$-curvature, we obtain the following result.

\begin{mcor}\label{cor3}
  Let $(M,F)$ be a $n$-dimensional Finsler manifold of scalar flag curvature and weakly isotropic $S$-curvature. Then $F$ has almost isotropic $S$-curvature if and only if the flag curvature is weakly isotropic and satisfies
  \begin{equation}
    \mathbf{K}=\frac{3}{n^2-1}(\pi^*dc)(\mathbf{G})+\pi^*\sigma,
  \end{equation}
  where the $c$ and $\sigma$ are function on $M$, and $c$ is the Berwald scalar curvature $\mathsf{e}$ up to a constant.
  \end{mcor}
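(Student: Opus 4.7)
The plan is to reduce Corollary \ref{cor3} to the classical Chen--Shen characterization of scalar-flag-curvature Finsler metrics with almost isotropic $S$-curvature, using Theorem \ref{theo 1} as the translation device between the two natural normalizations.

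First I would unpack what Theorem \ref{theo 1} supplies. Under the hypothesis that $F$ has weakly isotropic $S$-curvature, it yields a function $c\in C^\infty(M)$ (via $\mathsf{e}=\pi^*c$) and a 1-form $\xi\in\Omega^1(M)$ with
\[
S=\frac{1}{n-1}\pi^*c+\pi^*\xi(\mathbf{G}).
\]
Rewriting this in the standard normalization $S=(n+1)(\tilde c\, F+\tilde\eta)$ on $SM$, I would identify $\tilde c = c/(n^2-1)$ and $\tilde\eta$ as a constant multiple of $\xi$. Under this identification, the extra condition of \emph{almost isotropic} $S$-curvature translates precisely to $d\xi=0$.

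Next I would invoke the classical theorem: on a Finsler manifold of scalar flag curvature, $F$ has almost isotropic $S$-curvature $S=(n+1)(\tilde c\,F+\tilde\eta)$ with $d\tilde\eta=0$ if and only if the flag curvature takes the form
\[
\mathbf{K}=\frac{3\,\tilde c_{x^m}y^m}{F}+\sigma(x)
\]
for some $\sigma\in C^\infty(M)$. Substituting $\tilde c=c/(n^2-1)$ and using the identity $(\pi^*dc)(\mathbf{G})=c_{x^m}y^m$, this is exactly the expression for $\mathbf{K}$ displayed in the corollary, with $c$ equal to the Berwald scalar curvature $\mathsf{e}$ up to a constant. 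Both directions of the corollary then follow immediately: assuming $d\xi=0$ yields the asserted form of $\mathbf{K}$, while conversely the asserted form of $\mathbf{K}$, combined with the classical converse, forces $d\xi=0$ and hence almost isotropic $S$-curvature.

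The main obstacle I expect is purely bookkeeping: keeping track of the normalization constants (the factor $\tfrac{1}{n-1}$ of Theorem \ref{theo 1} versus the factor $n+1$ in the standard definition of weakly isotropic $S$-curvature) so that the precise coefficient $\tfrac{3}{n^2-1}$ comes out in the final formula. Once this scaling is aligned, no further computation is required beyond citing the Chen--Shen biconditional.
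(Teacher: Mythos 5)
Your forward direction is essentially workable: under the standing hypothesis of weakly isotropic $S$-curvature, Theorem \ref{theo 1} identifies the function in (\ref{WIS}) with the Berwald scalar curvature $\mathsf{e}$, and the rescaling $\tilde c=c/(n^2-1)$ does convert the Chen--Mo--Shen formula $\mathbf{K}=3\tilde c_{x^m}y^m/F+\sigma$ into the displayed one. This is consistent with what the paper does, except that the paper reproves that implication internally, from the identity $R^{~i}_{i~\alpha n}+J_{\alpha|n}=-\frac{n+1}{3}\mathbf{K}_{,\alpha}$ substituted into (\ref{pidxi2 WIS}), rather than by citation.

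The genuine gap is in the converse. The statement you invoke as a ``classical Chen--Shen biconditional'' is not available in the form you need: what \cite{ChMS} proves is the single implication ``almost isotropic $S$-curvature $\Rightarrow$ $\mathbf{K}=3c_{x^m}y^m/F+\sigma$'', and the paper itself says explicitly that only ``the necessary part of Corollary \ref{cor3}'' appeared there. Note also that if an unconditional converse were classical, the standing hypothesis of weakly isotropic $S$-curvature in the corollary would be superfluous, which should have been a warning sign. The sufficiency half is precisely the new content: one must show that weak isotropy of $\mathbf{K}$, together with $S=\frac{1}{n-1}\mathsf{e}+\pi^*\xi(\mathbf{G})$, forces $d\xi=0$. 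The paper does this by a concrete computation: substituting the weakly isotropic form of $\mathbf{K}$ into (\ref{pidxi2 WIS}) gives $\pi^*d\xi=\frac12\tilde R^{~i}_{i~\alpha\beta}\,\omega^\alpha\wedge\omega^\beta+\frac{1}{n-1}d(c-\mathsf{e})\wedge\omega^n$; applying $d$ to this identity and using the antisymmetry $\tilde R^{~i}_{i~\alpha\beta}=-\tilde R^{~i}_{i~\beta\alpha}$ kills the curvature terms and yields $d(c-\mathsf{e})=0$, hence $c=\mathsf{e}$ up to a constant and $d\xi=0$. Your proposal contains no substitute for this argument, so the ``if'' direction of the corollary remains unproven.
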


The necessary part of Corollary \ref{cor3} first appeared in \cite{ChMS} and plays an important role in the study of Finsler geometry.

\medskip

In this paper we adopt the index range and notations:
$$1\leq i,j,k,\ldots\leq n, \quad 1\leq \alpha,\beta,\gamma,\ldots\leq n-1, \quad 1\leq A,B,C,\ldots\leq 2n-1,$$

and $\bar{\alpha}:=n+\alpha,\bar{\beta}:=n+\beta,\bar{\gamma}:=n+\gamma,\ldots.$

\medskip

{\bf Acknowledgements.}  The author
would like to thanks Professor Huitao Feng for his consistent support and encouragement.

\section{Preliminary results}

We will introduce the necessary definitions and background of Finsler geometry in this section (c.f. \cite{BaoChernShen,ChernShen,Mo,SS,Shenbook1,Shenbook2} for more details).

Let $(M,F)$ be an $n$-dimensional Finsler manifold. Let $SM=\{F(x,y)=1\}$ be the unit sphere bundle (or indicatrix bundle) with the natural projection $\pi:SM\to M$.
Let $\{\mathbf{e}_1,\ldots,\mathbf{e}_n,\mathbf{e}_{n+1},\ldots,\mathbf{e}_{2n-1}\}$ be a local adapted orthonormal frame with respect to the Sasaki-type Riemannian metric
\begin{align*}
	g^{T(SM)}=\sum_{i}\omega^i\otimes\omega^i+\sum_{\bar{\alpha}}\omega^{\bar{\alpha}}\otimes\omega^{\bar{\alpha}}=:g+\dot{g}
\end{align*}
on $T(SM)$, where $\mathbf{e}_n=\mathbf{G}$ is the Reeb vector field. Let $\theta=\left\{\omega^1,\ldots,\omega^n,\omega^{n+1},\ldots,\omega^{2n-1}\right\}$ be the dual frame, then $\omega^n=F_{y^i}dx^i$ is the Hilbert form. $\omega^n$ defines a contact structure on $SM$. Let $\mathfrak{l}$ be the trivial bundle generated by $\mathbf{G}$. Let $\mathcal{D}$ be the contact distribution $\{\omega^n=0\}$. We define an almost complex structure on $\mathcal{D}$ by $$J=-\omega^\alpha\otimes\mathbf{e}_{\bar{\alpha}}+\omega^{\bar{\alpha}}\otimes\mathbf{e}_\alpha.$$
We will extend $J$ to be an endomorphism  $T(SM)$ by defining $J(\mathbf{e}_n)=0$. According to the definition of a contact metric structure in \cite{Blai}, one only need to verify that $(SM,J,\mathbf{G},\omega,g^{T(SM)})$ gives rise a contact metric structure.

Let $\mathscr{F}:=V(SM)$ be the integrable distribution given by the tangent spaces of the fibers of $SM$.  Let $p:T(SM)\to \mathscr{F} $ be the natural projection. Then the tangent bundle $T(SM)$ admits a splitting
\begin{equation}\label{splitting}
  T(SM)=\mathfrak{l}\oplus J\mathscr{F}\oplus \mathscr{F}=:H(SM)\oplus \mathscr{F}.
\end{equation}
The horizontal subbundle $H(SM)=\mathfrak{l}\oplus J\mathscr{F}$ is spanned by $\{\mathbf{e}_1,\ldots,\mathbf{e}_n\}$ on which the Chern connection is defined. And $\{\mathbf{e}_{n+1},\ldots,\mathbf{e}_{2n-1}\}$ gives a local frame of $\mathscr{F}$.

Let $\nabla^{\rm Ch}:\Gamma(H(SM))\to\Omega^1(SM;H(SM))$ be the Chern connection, which can be extended to a map
\begin{align*}
\nabla^{\rm Ch}:\Omega^*(SM;H(SM))\rightarrow \Omega^{*+1}(SM;H(SM)),
\end{align*}
where $\Omega^*(SM;H(SM)):=\Gamma(\Lambda^*(T^*SM)\otimes H(SM))$ denotes the horizontal valued differential forms on $SM$.
It is well known that the symmetrization of Chern connection  $\hat{\nabla}^{\rm Ch}$ is the Cartan connection.
The difference between $\hat{\nabla}^{\rm Ch}$ and $\nabla^{\rm Ch}$ will be referred as the Cartan endomorphism,
\begin{align*}
H= \hat{\nabla}^{\rm Ch}-\nabla^{\rm Ch}\quad\in\Omega^1(SM,{\rm
End}(H(SM))).
\end{align*}
Set $H=H_{ij}\omega^j\otimes\mathbf{e}_i$. By Lemma 3 and Lemma 4 in \cite{FL}, $H_{ij}=H_{ji}=H_{ij\gamma}\omega^{\bar{\gamma}}$ has the following form under natural coordinate systems
\begin{align*}
H_{ij\gamma}=-A_{pqk}u_i^pu_j^qu_{\gamma}^k,
\end{align*}
where $A_{ijk}=\frac{1}{4}F[F^2]_{y^iy^jy^k}$ and $u_i^j$ are the transformation matrix from adapted orthonormal frames to natural frames.

Set $\eta={\rm tr}[H]~\in\Omega^1(SM).$
It is referred as the Cartan-type form in \cite{FL}. The Cartan-type form has the following local formula
\begin{align*}
\eta=\sum_{i=1}^nH_{ii\gamma}\omega^{\bar{\gamma}}=:H_{\gamma}\omega^{\bar{\gamma}}.
\end{align*}

Let $\bm{\omega}=(\omega_j^i)$ be the connection matrix of the Chern
connection with respect to the local adapted orthonormal frame field,
i.e.,
\begin{align*}
\D{\rm Ch}\mathbf{e}_i=\omega_i^j\otimes\mathbf{e}_j.
\end{align*}
\begin{lem}[\cite{BaoChernShen,ChernShen,Mo,SS}]\label{sturcture eq}
The connection matrix $\bm{\omega}=(\omega_j^i)$ of $\nabla^{\rm Ch}$ is determined by the following structure equations,
\begin{equation}\left\{
\begin{aligned}
&d\vartheta=-\bm{\omega}\wedge\vartheta,\\
&\bm{\omega}+\bm{\omega}^t=-2H,
\end{aligned}\right.\label{Chern connection structure eq. matrix}
\end{equation}
where $\vartheta=(\omega^1,\ldots,\omega^{n})^t$. Furthermore,
$$\omega_{\alpha}^{n}=-\omega^{\alpha}_{n}=\omega^{\bar{\alpha}},\quad{\rm and}\quad \omega^{n}_{n}=0.$$
\end{lem}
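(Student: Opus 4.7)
\medskip

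\noindent\textbf{Proof proposal.}  The two structural equations, together with the supplementary identities, reformulate the two defining axioms of the Chern connection---vanishing of the horizontal torsion and almost metric-compatibility modulo the Cartan tensor---written out in the adapted orthonormal coframe of the contact metric structure on $SM$.  The plan is to (i) produce $\bm{\omega}$ uniquely by a Koszul-type argument, and then (ii) read off the two special identities from the extra input that $\mathbf{e}_n=\mathbf{G}$ is the Reeb spray.

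For step (i), I would expand a candidate connection form as $\omega^i_j=\Gamma^i_{jk}\omega^k+N^i_{j\gamma}\omega^{\bar{\gamma}}$ with respect to the full coframe $\{\omega^k,\omega^{\bar{\gamma}}\}$ of $T^*SM$.  The first equation $d\vartheta=-\bm{\omega}\wedge\vartheta$, split along its $\omega^k\wedge\omega^l$ and $\omega^{\bar{\gamma}}\wedge\omega^l$ components respectively, forces the symmetry $\Gamma^i_{jk}=\Gamma^i_{kj}$ in the horizontal lower indices and fixes each $N^i_{j\gamma}$ in terms of the exterior derivatives of the $\omega^i$.  Combined with the symmetrization identity $\bm{\omega}+\bm{\omega}^t=-2H$, a cyclic permutation of indices---the familiar Christoffel trick---then yields closed formulas for $\Gamma$ and $N$, establishing both existence and uniqueness of $\bm{\omega}$.

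For step (ii), the identity $\omega^n_n=0$ follows by taking the $(n,n)$-entry of $\bm{\omega}+\bm{\omega}^t=-2H$ and observing that the local expression $H_{nn\gamma}=-A_{pqk}u_n^p u_n^q u_\gamma^k$ vanishes: $u_n^p$ is proportional to $y^p$, and the Cartan tensor obeys the homogeneity identity $y^pA_{pqk}=0$.  For $\omega^n_\alpha=\omega^{\bar{\alpha}}$, I would use that in the adapted coframe of the contact metric structure with Reeb field $\mathbf{e}_n$, the Hilbert form satisfies $d\omega^n=-\omega^\alpha\wedge\omega^{\bar{\alpha}}$.  Substituting into $d\omega^n=-\omega^n_j\wedge\omega^j$ and using $\omega^n_n=0$ yields $\omega^n_\alpha\wedge\omega^\alpha=\omega^\alpha\wedge\omega^{\bar{\alpha}}$; combined with the skew-symmetry $\omega^n_\alpha+\omega^\alpha_n=-2H_{n\alpha}=0$ (again from $y^pA_{pqk}=0$), this pins down $\omega^n_\alpha=\omega^{\bar{\alpha}}$ and hence $\omega^\alpha_n=-\omega^{\bar{\alpha}}$.

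The main obstacle will be the careful matching of the vertical coframe $\omega^{\bar{\alpha}}$, defined a priori through the Sasaki-type metric, with the Chern connection forms $\omega^n_\alpha$ arising from the nonlinear connection.  The key input---the identity $d\omega^n=-\omega^\alpha\wedge\omega^{\bar{\alpha}}$---encodes the consistency between the contact metric splitting and the Chern nonlinear splitting (\ref{splitting}) of $T(SM)$; once this compatibility is invoked, the rest is an algebraic manipulation exploiting the homogeneity of the Cartan tensor.
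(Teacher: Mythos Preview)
The paper does not supply its own proof of this lemma; it is quoted from the standard references \cite{BaoChernShen,ChernShen,Mo,SS}. Your outline is essentially the textbook argument found there: a Christoffel--Koszul computation establishes existence and uniqueness of $\bm{\omega}$ from torsion-freeness together with almost $g$-compatibility, and the special identities then follow from the homogeneity relation $y^pA_{pqk}=0$ of the Cartan tensor.

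Two technical remarks on your step~(ii). First, with the paper's conventions one has $d\omega^n=\omega^\alpha\wedge\omega^{\bar{\alpha}}$ (compare the computation in (\ref{dd^l f}) and the use of $d\omega^n=\sum_\alpha\omega^\alpha\wedge\omega^{\bar{\alpha}}$ in the proof of Corollary~\ref{cor3}), not with a minus sign; your sign would produce $\omega^n_\alpha=-\omega^{\bar{\alpha}}$. Second, and more substantively, the pair of conditions you invoke---the wedge identity coming from $d\omega^n$ together with $\omega^n_\alpha+\omega^\alpha_n=0$---does not by itself pin down $\omega^n_\alpha$: one may still add an arbitrary symmetric horizontal piece $s_{\alpha\beta}\omega^\beta$ without violating either condition. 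In the cited references this ambiguity is removed in one of two ways: either the vertical coframe $\omega^{\bar{\alpha}}$ is \emph{defined} to be $\omega^n_\alpha$ (so the identity is a definition, and the compatibility with the Sasaki-type metric is what one checks), or one invokes the full system of structure equations $d\omega^\alpha$ together with the uniqueness already established in step~(i). Your closing paragraph correctly flags exactly this point as the crux; just be aware that the single equation for $d\omega^n$ is not sufficient on its own.
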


\begin{remark}
In \cite{FL}, we proved that the Chern connection is just the Bott connection on $H(SM)$ in the theory of foliation (c.f. \cite{Zhang}).
\end{remark}

Let $R^{\rm Ch}=\left(\nabla^{\rm Ch}\right)^2$  be the curvature of $\nabla^{\rm Ch}$.
Let $\Omega=\left(\Omega_j^i\right)$ be the curvature forms of $R^{\rm Ch}$. From the torsion freeness, the curvature form has no pure vertical differential form
\begin{align}\label{curvature chern connection}
\Omega_j^i:=d\omega_j^i-\omega_j^k\wedge\omega_k^i=\frac{1}{2}R_{j~kl}^{~i}\omega^k\wedge\omega^l+P_{j~k\gamma}^{~i}\omega^k\wedge\omega^{\bar{\gamma}}.
\end{align}
The Landsberg curvature is defined as
$$\mathbf{L}:=L^{~i}_{j~\gamma}\omega^{\bar{\gamma}}\otimes\omega^j\otimes\mathbf{e}_i=-P_{j~n\gamma}^{~i}\omega^{\bar{\gamma}}\otimes\omega^j\otimes\mathbf{e}_i,$$
the mean Landsberg curvature is defined by
$$\mathbf{J}={\rm tr}\mathbf{L}=J_{\gamma}\omega^{\bar{\gamma}}=-P_{i~n\gamma}^{~i}\omega^{\bar{\gamma}}.$$
The following formula is well known (c.f. \cite{BaoChernShen,ChernShen,Mo,SS})
\begin{align}\label{L=H|n}
	P_{n~k\gamma}^{~i}=-H_{ki\gamma|n}=-L_{ik\gamma}.
\end{align}
If a Finsler manifold satisfies $P=0$, $\mathbf{L}=0$ or $\mathbf{J}=0$, then it is called a Berwald, Landsberg or weak Landsberg manifold, respectively.

We will need parts of the Riemannian geometry of the sphere bundle, especially the geometry of the fibres. The following lemma can be found in \cite{Mo}.
\begin{lem}
Let $\Theta=(\Theta_B^A)$ be the connection form of the Levi-Civita connection $\nabla^{T(SM)}$ of $g^{T(SM)}$ with respect to the adapted frame $\{\mathbf{e}_A\}$. Then we have
	\begin{equation}
		\Theta_{\bar{\beta}}^{\bar{\gamma}}=\omega_{\beta}^\gamma+H_{\beta\gamma\alpha}\omega^{\bar{\alpha}},
	\end{equation}
where $\omega_{\beta}^\gamma$ are the connection forms of the Chern connection and $H_{\beta\gamma\alpha}\omega^{\bar{\alpha}}$ are the coefficients of the Cartan endomorphism.
\end{lem}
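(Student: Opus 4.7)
The plan is to characterize the Levi-Civita connection matrix $\Theta = (\Theta^A_B)$ uniquely by the two conditions of the fundamental theorem of Riemannian geometry: the torsion-free structure equations $d\omega^A = -\Theta^A_B \wedge \omega^B$, and, since $\{\mathbf{e}_A\}$ is $g^{T(SM)}$-orthonormal, the skew-symmetry $\Theta^A_B + \Theta^B_A = 0$. I would therefore propose $\Theta^{\bar{\gamma}}_{\bar{\beta}} := \omega^\gamma_\beta + H_{\beta\gamma\alpha}\omega^{\bar{\alpha}}$ as a candidate and verify both conditions for the vertical-vertical block.

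For the torsion equation, the key is Lemma \ref{sturcture eq}, which gives $\omega^{\bar{\gamma}} = \omega^n_\gamma$. Using the curvature structure equation for the Chern connection I would write
\begin{equation*}
d\omega^{\bar{\gamma}} \;=\; d\omega^n_\gamma \;=\; \omega^k_\gamma \wedge \omega^n_k + \Omega^n_\gamma,
\end{equation*}
then substitute $\omega^n_n = 0$, $\omega^n_\alpha = \omega^{\bar{\alpha}}$, and the decomposition of $\Omega^n_\gamma$ from (\ref{curvature chern connection}). Since the $R$- and $P$-parts of $\Omega^n_\gamma$ contribute only $\omega^k \wedge \omega^l$ and $\omega^k \wedge \omega^{\bar{\delta}}$-type 2-forms, the purely vertical-vertical block of $d\omega^{\bar{\gamma}} = -\Theta^{\bar{\gamma}}_B \wedge \omega^B$ forces $\Theta^{\bar{\gamma}}_{\bar{\beta}}$ to coincide with $\omega^\gamma_\beta$ modulo 1-forms of type $\omega^{\bar{\alpha}}$.

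To fix the remaining vertical freedom, I would impose skew-symmetry. The Chern structure equation $\bm{\omega} + \bm{\omega}^t = -2H$ yields $\omega^\gamma_\beta + \omega^\beta_\gamma = -2H_{\beta\gamma\alpha} \omega^{\bar{\alpha}}$, and together with the symmetry $H_{\beta\gamma\alpha} = H_{\gamma\beta\alpha}$ in the first two indices (which follows from the local formula $H_{ij\gamma} = -A_{pqk} u_i^p u_j^q u_\gamma^k$ and the symmetry of the Cartan tensor $A_{ijk}$), the correction term $H_{\beta\gamma\alpha} \omega^{\bar{\alpha}}$ exactly cancels the anti-symmetric defect of $\omega^\gamma_\beta$:
\begin{equation*}
\Theta^{\bar{\gamma}}_{\bar{\beta}} + \Theta^{\bar{\beta}}_{\bar{\gamma}} \;=\; \bigl(\omega^\gamma_\beta + \omega^\beta_\gamma\bigr) + 2H_{\beta\gamma\alpha} \omega^{\bar{\alpha}} \;=\; 0.
\end{equation*}
By uniqueness of the Levi-Civita connection, this identifies the claimed formula.

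The main technical obstacle is clean bookkeeping of the 2-form types in $d\omega^{\bar{\gamma}}$: the Landsberg information packaged in the $P$-part of $\Omega^n_\gamma$ via the identity (\ref{L=H|n}) flows into the mixed blocks $\Theta^{\bar{\gamma}}_k$ and $\Theta^k_{\bar{\beta}}$ rather than into $\Theta^{\bar{\gamma}}_{\bar{\beta}}$, and one needs this separation in order to justify the ``modulo horizontal'' step in the preceding computation. Once it is verified, the remaining argument is purely algebraic.
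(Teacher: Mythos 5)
The paper does not prove this lemma at all --- it is quoted verbatim from \cite{Mo} --- so your sketch must stand on its own. Your skew-symmetry computation is correct and is the easy half: using $\bm{\omega}+\bm{\omega}^t=-2H$ and $H_{\beta\gamma\alpha}=H_{\gamma\beta\alpha}$ one indeed gets $\Theta^{\bar{\gamma}}_{\bar{\beta}}+\Theta^{\bar{\beta}}_{\bar{\gamma}}=0$ for the candidate. But this only constrains the part of $\Theta^{\bar{\gamma}}_{\bar{\beta}}$ that is symmetric under $\beta\leftrightarrow\gamma$, and the torsion-freeness half of your argument, which is supposed to supply the rest, contains a genuine error. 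Write $\Theta^{\bar{\gamma}}_{\bar{\beta}}=c_{\beta l}\,\omega^l+e_{\beta\delta}\,\omega^{\bar{\delta}}$ and $\Theta^{\bar{\gamma}}_{k}=a_{kl}\,\omega^l+b_{k\delta}\,\omega^{\bar{\delta}}$. In $d\omega^{\bar{\gamma}}=-\Theta^{\bar{\gamma}}_{k}\wedge\omega^{k}-\Theta^{\bar{\gamma}}_{\bar{\beta}}\wedge\omega^{\bar{\beta}}$, the purely vertical block $\omega^{\bar{\delta}}\wedge\omega^{\bar{\beta}}$ sees only the antisymmetrization $e_{[\beta\delta]}$; it says nothing whatsoever about the horizontal coefficients $c_{\beta l}$. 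Those appear in the mixed block $\omega^{l}\wedge\omega^{\bar{\beta}}$, where they enter only through the combination $b_{l\beta}-c_{\beta l}$, entangled with the vertical part of $\Theta^{\bar{\gamma}}_{l}$, which you never specify. So the step ``the purely vertical-vertical block forces $\Theta^{\bar{\gamma}}_{\bar{\beta}}$ to coincide with $\omega^{\gamma}_{\beta}$ modulo $\omega^{\bar{\alpha}}$-terms'' reads off the wrong component and does not establish the statement $c_{\beta l}=i_{\mathbf{e}_l}(\omega^{\gamma}_{\beta})$, which is precisely the substantive content of the lemma (the $H$-correction is purely vertical, so the whole horizontal part of the formula is what must be proved here).

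Consequently the closing appeal to uniqueness of the Levi-Civita connection is a non sequitur: uniqueness lets you conclude $\Theta=\Theta'$ only after you have exhibited the \emph{entire} matrix $(\Theta^{A}_{B})$ and verified all the structure equations $d\omega^{A}=-\Theta^{A}_{B}\wedge\omega^{B}$ together with full skew-symmetry; checking one block of one equation and one block of the metric condition does not determine that block, because the equations couple the blocks cyclically (exactly as in the Christoffel/Koszul derivation). Two honest repairs: (a) write down the full candidate matrix, including $\Theta^{\bar{\gamma}}_{k}$ and $\Theta^{i}_{j}$, whose coefficients involve the Chern curvatures $R^{~n}_{\gamma~kl}$ and $P^{~n}_{\gamma~k\delta}$ appearing in your expansion of $d\omega^{\bar{\gamma}}$, verify everything, and only then invoke uniqueness; or, more economically, (b) compute the single block directly from the Koszul formula, $2\,\Theta^{\bar{\gamma}}_{\bar{\beta}}(\mathbf{e}_{A})=2g^{T(SM)}(\nabla^{T(SM)}_{\mathbf{e}_{A}}\mathbf{e}_{\bar{\beta}},\mathbf{e}_{\bar{\gamma}})$, expressing the Lie brackets via $d\omega^{B}(\mathbf{e}_{A},\mathbf{e}_{C})=-\omega^{B}([\mathbf{e}_{A},\mathbf{e}_{C}])$ and the structure equations of Lemma \ref{sturcture eq}; this isolates the vertical-vertical block without requiring the others.
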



As the restriction the Levi-Civita connection $\nabla^{T(SM)}$ on $\mathscr{F}$, $\nabla^{\mathscr{F}}:=p\nabla^{T(SM)}p$ is the Euclidean connection of the bundle $(\mathscr{F},\dot{g})$. It is clear that along each fiber of $S_xM$, $x\in M$, $\nabla^{\mathscr{F}}$ is just the Levi-Civita connection of the Riemannian manifold $(S_xM, \dot{g}_x)$. We also define a connection $\bar{\nabla}^{\rm Ch}$ on $\mathscr{F}$ by
\begin{equation}\label{vertical ch connection}
  \bar{\nabla}^{\rm Ch}\mathbf{e}_{\bar{\beta}}=\omega_{\beta}^{\alpha}\otimes\mathbf{e}_{\bar{\alpha}}.
\end{equation}

We introduces some symbols to denote different covariant differentials for conveniens . For example, let $T=T^i_j\mathbf{e}_i\otimes\omega^j$ be an arbitrary smooth local section of the bundle $H(SM)\otimes H^*(SM)$ over $SM$. Then
\begin{equation*}
  \nabla^{\rm Ch}T=\left(dT^i_j+T^k_j\omega^i_k-T^i_k\omega^k_j\right)\otimes\mathbf{e}_i\otimes\omega^j
\end{equation*}
If we expand the coefficients as one forms on $SM$ in terms of the adapted coframe, then we denote
\begin{equation*}
(\nabla^{\rm Ch}T)^i_j:=dT^i_j+T^k_j\omega^i_k-T^i_k\omega^k_j=:T^i_{j|\alpha}\omega^\alpha+T^i_{j|n}\omega^n+T^i_{j;\gamma}\omega^{\bar{\gamma}}.
\end{equation*}
Therefore we obtain
\begin{equation*}
 T^i_{j|\alpha}=i_{\mathbf{e}_{\alpha}}(\nabla^{\rm Ch}T)^i_j,\quad
 T^i_{j|n}=i_{\mathbf{e}_{n}}(\nabla^{\rm Ch}T)^i_j, \quad
 T^i_{j;\gamma}=i_{\mathbf{e}_{\bar{\gamma}}}(\nabla^{\rm Ch}T)^i_j,
\end{equation*}
where $i_{v}$ is the notation for the interior multiplication on differential forms by any vector $v$.

According to the splitting (\ref{splitting}) and using the almost complex structure $J$, we obtain a section of $\mathscr{F}\otimes\mathscr{F}^*$ from $T$ as following
\begin{equation*}
  \bar{T}=T^{\alpha}_{\beta}\mathbf{e}_{\bar{\alpha}}\otimes\omega^{\bar{\beta}}.
\end{equation*}
By (\ref{vertical ch connection}), the covariant differential of $\bar{T}$ by using $\bar{\nabla}^{\rm Ch}$ is given by
\begin{equation*}
  \bar{\nabla}^{\rm Ch}\bar{T}=\left(dT^{\alpha}_{\beta}+T^{\mu}_{\beta}\omega^{\alpha}_{\mu}-T^{\alpha}_{\mu}\omega^{\mu}_{\beta}\right)\otimes\mathbf{e}_{\bar{\alpha}}\otimes\omega^{\bar{\beta}}
\end{equation*}
Similarly, we denote that
\begin{equation*}
  (\bar{\nabla}^{\rm Ch}\bar{T})^{\alpha}_{\beta}:=dT^{\alpha}_{\beta}+T^{\mu}_{\beta}\omega^{\alpha}_{\mu}-T^{\alpha}_{\mu}\omega^{\mu}_{\beta}=T^{\alpha}_{\beta\parallel\gamma}\omega^{\gamma}+T^{\alpha}_{\beta\parallel n}\omega^{n}+T^{\alpha}_{\beta,\gamma}\omega^{\bar{\gamma}}.
\end{equation*}
Therefore, by Lemma \ref{sturcture eq}, one has the following relations
\begin{equation}\label{covar ch bar ch}
  \begin{split}
    T^{\alpha}_{\beta\parallel i}&=i_{\mathbf{e}_{i}}(dT^{\alpha}_{\beta}+T^{\mu}_{\beta}\omega^{\alpha}_{\mu}-T^{\alpha}_{\mu}\omega^{\mu}_{\beta})=i_{\mathbf{e}_{i}}(dT^{\alpha}_{\beta}+T^{k}_{\beta}\omega^{\alpha}_{k}-T^{\alpha}_{k}\omega^{k}_{\beta})= T^{\alpha}_{\beta| i}\\
    T^{\alpha}_{\beta,\gamma}&=i_{\mathbf{e}_{\bar{\gamma}}}(dT^{\alpha}_{\beta}+T^{\mu}_{\beta}\omega^{\alpha}_{\mu}-T^{\alpha}_{\mu}\omega^{\mu}_{\beta})\\
    &=i_{\mathbf{e}_{\bar{\gamma}}}(dT^{\alpha}_{\beta}+T^{k}_{\beta}\omega^{\alpha}_{k}-T^{\alpha}_{k}\omega^{k}_{\beta})-i_{\mathbf{e}_{\bar{\gamma}}}T^n_{\beta}\omega_n^{\alpha}+i_{\mathbf{e}_{\bar{\gamma}}}T^{\alpha}_n\omega^n_{\beta}\\
    &= T^{\alpha}_{\beta;\gamma}+T^n_{\beta}\delta^\alpha_\gamma-T_n^\alpha\delta_{\beta\gamma}.
  \end{split}
\end{equation}
It is obvious that if $T$ is a section $J\mathscr{F}\otimes (J\mathscr{F})^*$, then $T^{\alpha}_{\beta;\gamma}=T^{\alpha}_{\beta,\gamma}$. These relations will be used in the following study.

The Berwald connection can be represented as $\nabla^{\rm B}=\nabla^{\rm Ch}+J^*\mathbf{L}$, where $J^*$ is dual of the almost complex structure $J$. Let $\tilde{\bm{\omega}}=(\tilde{\omega}_j^i)$ denote the Berwald connection form, then
\begin{align*}
  \tilde{\omega}_j^i=\omega_j^i-L^i_{j\alpha}\omega^\alpha.
\end{align*}

From the torsion freeness the Chern connection and (\ref{L=H|n}),
the Berwald connection is torsion free,
\begin{equation*}
  d\omega^i=\omega^j\wedge\tilde{\omega}_j^i.
\end{equation*}
Let  $\tilde{\Omega}_j^i$ be the curvature forms of the Berwald connection.
By the torsion freeness of the Berwald connection,  we have
\begin{align*}
\tilde{\Omega}_j^i:=d\tilde{\omega}_j^i-\tilde{\omega}_j^k\wedge\tilde{\omega}_k^i=:\frac{1}{2}\tilde{R}_{j~kl}^{~i}\omega^k\wedge\omega^l+\tilde{P}_{j~k\gamma}^{~i}\omega^k\wedge\omega^{\bar{\gamma}},
\end{align*}
where $\tilde{R}_{j~kl}^{~i}=-\tilde{R}_{j~lk}^{~i}.$ By using Lemma \ref{sturcture eq}, the curvature relations between the Chern connection and the Berwald connection are as following. The $hh$-curvatures satisfy
\begin{align}\label{hh curvature Berwald by Chern}
\begin{split}
  &\tilde{R}^{~\alpha}_{\beta~\gamma\mu}=R^{~\alpha}_{\beta~\gamma\mu}-(L^\nu_{\beta\gamma}L^\alpha_{\nu\mu}-L^\nu_{\beta\mu}L^\alpha_{\nu\gamma})+(L^\alpha_{\beta\gamma|\mu}-L^\alpha_{\beta\mu|\gamma}),\\
  &\tilde{R}^{~\alpha}_{\beta~\gamma n}= R^{~\alpha}_{\beta~\gamma n}+L^{\alpha}_{\beta\gamma|n},\quad \tilde{R}^{~\alpha}_{n~kl}=-\tilde{R}^{~n}_{\alpha~kl}=R^{~\alpha}_{n~kl}.
    \end{split}
\end{align}

The mean stretch cuvature
 \begin{equation}\label{msc}
	\bar{\Sigma}:=2\tr[\tilde{R}-R]
\end{equation}
is introduced and studied in \cite{NajafiTa}.

The relations between the $hv$-curvatures of the Berwald connection and the Chern connection are given by
\begin{align}\label{hv curvature Berwald by Chern}
	\begin{split}
		&\tilde{P}^{~\alpha}_{\beta~\gamma\mu}=P^{~\alpha}_{\beta~\gamma\mu}+L^\alpha_{\beta\gamma;\mu},\quad \tilde{P}^{~n}_{\alpha~\gamma\mu}=2L_{\alpha\gamma\mu},\\
		&\tilde{P}^{~\alpha}_{\beta~n\mu}=0,\quad \tilde{P}^{~\alpha}_{n~k\gamma}=0,\quad \tilde{P}^{~n}_{\alpha~n\gamma}=0.
	\end{split}
\end{align}

The mean Berwald curvature or the $\mathbf{E}$-curvature is defined by
\begin{equation*}
  \mathbf{E}:=\tr \tilde{P}=:E_{\gamma\mu}\omega^\gamma\wedge\omega^{\bar{\mu}}.
\end{equation*}
Under the local adapted frame, the coefficients of the $\mathbf{E}$-curvature is represented as
\begin{equation}\label{coefficients E}
  E_{\gamma\mu}=\tilde{P}^{~i}_{i~\gamma\mu}=\tilde{P}^{~\alpha}_{\alpha~\gamma\mu}=P^{~\alpha}_{\alpha~\gamma\mu}+L^\alpha_{\alpha\gamma;\mu}=P^{~\alpha}_{\alpha~\gamma\mu}+J_{\gamma;\mu}=P^{~\alpha}_{\alpha~\gamma\mu}+J_{\gamma,\mu}.
\end{equation}
We deliberately omit the factors $\frac{1}{2}$ and $F^{-1}$ in the usual definition of $\mathbf{E}$-curvature as a tensor on $TM_0$ in literatures.

\

\section{The $S$-curvature and the Berwald scalar curvature}

In this section we develop the relations between the $S$-curvature and the Berwald scalar curvature $\mathsf{e}$. We first state some necessary definitions.

Let $dV_M$ be any volume form of $M$. On a local coordinate chart $(U;x^i)$, $dV_M=\sigma(x)dx^1\wedge\cdots\wedge dx^n$.
The following important function on $SM$ is well defined,
$$\tau=\ln\frac{\sqrt{\det{(g_{ij})}}}{\sigma(x)}.$$
$\tau$ is called the distortion of $(M,F)$. The derivative of $\tau$ along the vector field $\mathbf{G}$ will be denoted by $S:=\mathbf{G}(\tau)$. The function $\mathbf{S}:=F\cdot S$ defined on $TM_0$ is called the $S$-curvature. In this paper, we will also call $S$ the $S$-curvature for convenient.
These two important concepts were first introduced by Zhongmin Shen in \cite{shen97}.
\begin{remark}
	In the study \cite{Li}, we show that the centro-affine differential geometric structure of the fibres of $SM$ naturally fit into the overall geometry of the fibre bundle $SM$. In affine geometry, the Tchebychev potential is the logarithm of the Radon-Nikodym derivative of the one induced by the affine metric and the measure induced by the embedding. One easily shows that the restriction of $-\tau$ on each fiber of $SM$ is just the Tchebychev potential of the fiber. Therefore the distortion can be naturally considered as the family version of the Tchebychev potential. One consults \cite{LSZ, NS, SSV} for more details about the affine differential geometry.
\end{remark}

In this paper, we will use the following definitions.
\begin{definition}[\cite{ChengShen,SS}]
	Let $(M,F)$ be an $n$-dimensional Finsler manifold.
	\begin{enumerate}
		\item[(i)] If the $S$-curvature satisfies
		\begin{equation}\label{WIS}
			S=\frac{1}{n-1}\pi^*c+\pi^*\xi(\mathbf{G})
		\end{equation}
		for some $c\in C^{\infty}(M)$ and $\xi\in\Omega^1(M)$, then $F$ is said to have weakly isotropic $S$-curvature;
		\item[(ii)] If $S$ satisfies (\ref{WIS}) and $d\xi=0$, then it is called to be almost isotropic;
		\item[(iii)] If $S$ satisfies (\ref{WIS}) and $\xi=0$, then it is called to be isotropic.
	\end{enumerate}
\end{definition}

Let $\widetilde{dV_M}=e^{-f}dV_M$ be any other volume form on $M$, then the related distortion $\tilde{\tau}=\tau+\pi^*f$, where $f\in C^{\infty}(M)$. Thus the $S$-curvature related to $\widetilde{dV_M}$ is given by
\begin{equation}
	\tilde{S}=S+\pi^*df(\mathbf{G}).
\end{equation}
Therefore, the $S$-curvatures of a Finsler manifold are determined up to the exact 1-forms on the base manifold $M$.
If  $H^1_{\rm dR}(M;\mathbf{R})=0$, then the conditions almost isotropic $S$-curvature and isotropic $S$-curvature are equivalent.

Let $f\in C^{\infty}(SM)$ be any smooth function on $SM$, then $df$ admits the following decomposition with respect to (\ref{splitting}),
\begin{equation}
	df=f_{|\alpha}\omega^{\alpha}+f_{|n}\omega^{n}+f_{,\alpha}\omega^{\bar{\alpha}},
\end{equation}
where we denote $f_{|i}:=\mathbf{e}_i (f)$, and  $f_{,\alpha}:=\mathbf{e}_{\bar{\alpha}}(f)$.

\begin{lem}\label{d2f=0}
	From the fact $d^2f=0$, one has the following identities,
	\begin{align}
		&f_{|\alpha|\beta}-f_{|\beta|\alpha}+f_{,\gamma}R^{~\gamma}_{n~\alpha\beta}=0,\label{basic equ 1 d2f}\\
		&f_{|\alpha|n}-f_{|n|\alpha}+f_{,\gamma}R^{~\gamma}_{n~\alpha n}=0,\label{basic equ 2 d2f}\\
		&f_{|\alpha,\beta}-f_{|n}\delta_{\alpha\beta}+f_{,\gamma}P^{~\gamma}_{n~\alpha\beta}-f_{,\beta|\alpha}=0,\label{basic equ 3 d2f}\\
		&f_{|\alpha}+f_{|n,\alpha}-f_{,\alpha|n}=0,\label{basic equ 4 d2f}\\
		&f_{,\alpha,\beta}-f_{,\beta,\alpha}=0. \label{basic equ 5 d2f}
	\end{align}
The following useful equations can be derived from the above identities
\begin{align}
		&(f_{|n})_{,\alpha|n}+(f_{|n})_{|\alpha}=f_{,\alpha|n|n}+f_{,\gamma}R^{~\gamma}_{n~\alpha n},\label{basic equ 6 d2f}\\
		&(f_{|n})_{,\alpha,\beta}+(f_{|n})\delta_{\alpha\beta}=(f_{,\gamma}P^{~\gamma}_{n~\alpha\beta}-f_{,\beta|\alpha})+f_{,\alpha|n,\beta}.\label{basic equ 7 d2f}
\end{align}
\end{lem}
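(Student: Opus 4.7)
My plan is to derive the five basic identities (\ref{basic equ 1 d2f})--(\ref{basic equ 5 d2f}) by directly computing $d(df)=0$ in the local adapted coframe $\{\omega^\alpha,\omega^n,\omega^{\bar{\alpha}}\}$ and extracting them as the coefficients of the five types of independent 2-forms, and then to obtain (\ref{basic equ 6 d2f}) and (\ref{basic equ 7 d2f}) as immediate corollaries.

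First I would record the exterior derivatives of the coframe. Combining Lemma \ref{sturcture eq} with the relations $\omega_\alpha^n=\omega^{\bar{\alpha}}=-\omega_n^\alpha$ and $\omega_n^n=0$ gives
\[
d\omega^\alpha=-\omega_\beta^\alpha\wedge\omega^\beta+\omega^{\bar{\alpha}}\wedge\omega^n, \qquad d\omega^n=\omega^\beta\wedge\omega^{\bar{\beta}}.
\]
For $d\omega^{\bar{\alpha}}=d\omega_\alpha^n$, the Chern curvature equation (\ref{curvature chern connection}) together with $H_{n\alpha}=0$ (which follows from $A_{ijk}y^k=0$ and gives $\Omega_n^\alpha=-\Omega_\alpha^n$) yields
\[
d\omega^{\bar{\alpha}}=\omega_\alpha^\beta\wedge\omega^{\bar{\beta}}-\tfrac{1}{2}R^{~\alpha}_{n~kl}\omega^k\wedge\omega^l-P^{~\alpha}_{n~k\gamma}\omega^k\wedge\omega^{\bar{\gamma}}.
\]

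Next I would expand $df_{|\alpha}$, $df_{|n}$ and $df_{,\alpha}$ using the Chern connection on the horizontal 1-form $f_{|i}\omega^i$ (as a section of $H^*(SM)$) and the $\bar{\nabla}^{\rm Ch}$ connection on the vertical 1-form $f_{,\alpha}\omega^{\bar{\alpha}}$ (as a section of $\mathscr{F}^*$), translating the Chern vertical derivatives to the comma notation used in the statement via the conversion formula (\ref{covar ch bar ch}). Substituting these expansions together with Step 1 into $d^2f=0$ and collecting by the five 2-form types $\omega^\alpha\wedge\omega^\beta$, $\omega^\alpha\wedge\omega^n$, $\omega^\alpha\wedge\omega^{\bar{\beta}}$, $\omega^n\wedge\omega^{\bar{\alpha}}$, $\omega^{\bar{\alpha}}\wedge\omega^{\bar{\beta}}$ produces the five basic identities respectively. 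The key cancellations are: (a) horizontal pieces of $f_{|\beta}\omega_\alpha^\beta$ and $f_{,\beta}\omega_\alpha^\beta$ cancel against corresponding contributions from $f_{|\alpha}d\omega^\alpha$ and $f_{,\alpha}d\omega^{\bar{\alpha}}$ after index relabelling; (b) in the purely vertical channel the surviving piece $-2H_{\alpha\beta\gamma}\omega^{\bar{\gamma}}$ of $\omega_\alpha^\beta+\omega_\beta^\alpha$ vanishes by the total symmetry of $H_{\alpha\beta\gamma}$ contracted with the antisymmetric $\omega^{\bar{\gamma}}\wedge\omega^{\bar{\alpha}}$, collapsing the channel to (\ref{basic equ 5 d2f}); (c) the piece $f_{|n}\omega^{\bar{\alpha}}=f_{|n}\omega_\alpha^n$ of $df_{|\alpha}$ combines with $f_{|n}d\omega^n$ and the $;\to,$ conversion to produce the $-f_{|n}\delta_{\alpha\beta}$ term in (\ref{basic equ 3 d2f}); (d) the analogous piece $-f_{|\alpha}\omega^{\bar{\alpha}}$ of $df_{|n}$ combines with $f_{|\alpha}\omega^{\bar{\alpha}}\wedge\omega^n$ from $f_{|\alpha}d\omega^\alpha$ to yield the free $f_{|\alpha}$ term in (\ref{basic equ 4 d2f}).

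For the two corollaries, (\ref{basic equ 6 d2f}) follows by applying the horizontal $\mathbf{e}_n$-derivative $|n$ to (\ref{basic equ 4 d2f}), obtaining $f_{|\alpha|n}+f_{|n,\alpha|n}-f_{,\alpha|n|n}=0$, and substituting (\ref{basic equ 2 d2f}) to replace $f_{|\alpha|n}$ by $f_{|n|\alpha}-f_{,\gamma}R^{~\gamma}_{n~\alpha n}$; identity (\ref{basic equ 7 d2f}) follows analogously by applying the vertical $\mathbf{e}_{\bar{\beta}}$-derivative $,\beta$ to (\ref{basic equ 4 d2f}) and then substituting (\ref{basic equ 3 d2f}) to eliminate $f_{|\alpha,\beta}$. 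The principal obstacle is the bookkeeping in the coefficient-matching step: one must track, for each of the five 2-form types, how every occurrence of the unknown connection 1-form $\omega_\alpha^\beta$ either cancels pairwise or contracts with $H$ to vanish by symmetry, and must invoke $H_{n\alpha}=0$ both to get the clean form of $d\omega^{\bar{\alpha}}$ and to kill stray $H$-contributions in the mixed channels. Once this is carried out, the remaining steps are purely algebraic.
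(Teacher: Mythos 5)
Your proposal follows essentially the same route as the paper: expand $d(df)=0$ in the adapted coframe using the Chern structure equations together with $d\omega^{\bar{\alpha}}=-\Omega_n^\alpha-\omega_n^\beta\wedge\omega_\beta^\alpha$, read off the coefficients of the five independent types of 2-forms to get (\ref{basic equ 1 d2f})--(\ref{basic equ 5 d2f}), and then derive (\ref{basic equ 6 d2f}) and (\ref{basic equ 7 d2f}) by differentiating (\ref{basic equ 4 d2f}) along $\mathbf{e}_n$ resp.\ $\mathbf{e}_{\bar{\beta}}$ and substituting (\ref{basic equ 2 d2f}) resp.\ (\ref{basic equ 3 d2f}). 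The cancellations you flag (the $\omega_\alpha^\beta$ terms and the $H$-contractions killed by symmetry) are exactly the ones the paper's computation carries out, so the argument is correct and matches the original.
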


\begin{proof}
	By using the properties of the Chern connection and the notations in Section 1, we have
	\begin{equation}\label{dd^JF f}
		\begin{split}
				&d(f_{|\alpha}\omega^\alpha)=(d(f_{|\alpha})-f_{|\beta}\omega^\beta_\alpha)\wedge\omega^\alpha-f_{|\beta}\omega^\beta_n\wedge\omega^n\\
			=&f_{|\alpha|\beta}\omega^\beta\wedge\omega^\alpha+f_{|\alpha|n}\omega^n\wedge\omega^\alpha+f_{|\alpha,\beta}\omega^{\bar{\beta}}\wedge\omega^\alpha+f_{|\alpha}\omega^{\bar{\alpha}}\wedge\omega^n\\
			=&-\frac{1}{2}(f_{|\alpha|\beta}-f_{|\beta|\alpha})\omega^\alpha\wedge\omega^\beta-f_{|\alpha|n}\omega^\alpha\wedge\omega^n-f_{|\alpha,\beta}\omega^\alpha\wedge\omega^{\bar{\beta}}-f_{|\alpha}\omega^n\wedge\omega^{\bar{\alpha}},
		\end{split}
	\end{equation}
		\begin{equation}\label{dd^l f}
		\begin{split}
			&d(f_{|n}\omega^n)=f_{|n|\alpha}\omega^\alpha\wedge\omega^n+f_{|n,\alpha}\omega^{\bar{\alpha}}\wedge\omega^n+f_{|n}\delta_{\alpha\beta}\omega^\alpha\wedge\omega^{\bar{\beta}}\\
			=&f_{|n|\alpha}\omega^\alpha\wedge\omega^n+f_{|n}\delta_{\alpha\beta}\omega^\alpha\wedge\omega^{\bar{\beta}}-f_{|n,\alpha}\omega^n\wedge\omega^{\bar{\alpha}},
		\end{split}
	\end{equation}
and
\begin{equation}\label{dd^F f}
	\begin{split}
		&d(f_{,\alpha}\omega^{\bar{\alpha}})=d(f_{,\alpha})\wedge\omega^{\bar{\alpha}}+f_{,\alpha}(-\Omega_n^\alpha-\omega_n^\beta\wedge\omega_\beta^\alpha)\\
		=&(d(f_{,\alpha})-f_{,\beta}\omega_\alpha^\beta)\wedge\omega^{\bar{\alpha}}-f_{,\alpha}\Omega_n^\alpha\\
		=&f_{,\alpha|\beta}\omega^\beta\wedge\omega^{\bar{\alpha}}+f_{,\alpha|n}\omega^n\wedge\omega^{\bar{\alpha}}+f_{,\alpha,\beta}\omega^{\bar{\beta}}\wedge\omega^{\bar{\alpha}}\\
		&-\frac{1}{2}f_{,\alpha}R^{~\alpha}_{n~\beta\gamma}\omega^\beta\wedge\omega^\gamma-f_{,\alpha}R^{~\alpha}_{n~\beta n}\omega^\beta\wedge\omega^n-f_{,\alpha}P^{~\alpha}_{n~\beta\gamma}\omega^\beta\wedge\omega^{\bar{\gamma}}\\
		=&-\frac{1}{2}f_{,\alpha}R^{~\alpha}_{n~\beta\gamma}\omega^\beta\wedge\omega^\gamma-f_{,\alpha}R^{~\alpha}_{n~\beta n}\omega^\beta\wedge\omega^n+(f_{,\gamma|\beta}-f_{,\alpha}P^{~\alpha}_{n~\beta\gamma})\omega^\beta\wedge\omega^{\bar{\gamma}}\\
		&+f_{,\alpha|n}\omega^n\wedge\omega^{\bar{\alpha}}-\frac{1}{2}(f_{,\alpha,\beta}-f_{,\beta,\alpha})\omega^{\bar{\alpha}}\wedge\omega^{\bar{\beta}}.
	\end{split}
\end{equation}
By taking the sum of (\ref{dd^JF f}), (\ref{dd^l f}) and (\ref{dd^F f}), we obtain
\begin{equation}\label{d2f}
	\begin{split}
		0=&-\frac{1}{2}(f_{|\alpha|\beta}-f_{|\beta|\alpha})\omega^\alpha\wedge\omega^\beta-f_{|\alpha|n}\omega^\alpha\wedge\omega^n-f_{|\alpha,\beta}\omega^\alpha\wedge\omega^{\bar{\beta}}-f_{|\alpha}\omega^n\wedge\omega^{\bar{\alpha}}\\
		&+f_{|n|\alpha}\omega^\alpha\wedge\omega^n+f_{|n}\delta_{\alpha\beta}\omega^\alpha\wedge\omega^{\bar{\beta}}-f_{|n,\alpha}\omega^n\wedge\omega^{\bar{\alpha}}\\
		&-\frac{1}{2}f_{,\alpha}R^{~\alpha}_{n~\beta\gamma}\omega^\beta\wedge\omega^\gamma-f_{,\alpha}R^{~\alpha}_{n~\beta n}\omega^\beta\wedge\omega^n+(f_{,\gamma|\beta}-f_{,\alpha}P^{~\alpha}_{n~\beta\gamma})\omega^\beta\wedge\omega^{\bar{\gamma}}\\
		&+f_{,\alpha|n}\omega^n\wedge\omega^{\bar{\alpha}}-\frac{1}{2}(f_{,\alpha,\beta}-f_{,\beta,\alpha})\omega^{\bar{\alpha}}\wedge\omega^{\bar{\beta}}\\
		=&-\frac{1}{2}(f_{|\alpha|\beta}-f_{|\beta|\alpha}+f_{,\gamma}R^{~\gamma}_{n~\alpha\beta})\omega^\alpha\wedge\omega^\beta-(f_{|\alpha|n}-f_{|n|\alpha}+f_{,\gamma}R^{~\gamma}_{n~\alpha n})\omega^\alpha\wedge\omega^n\\
		&-(f_{|\alpha,\beta}-f_{|n}\delta_{\alpha\beta}+f_{,\gamma}P^{~\gamma}_{n~\alpha\beta}-f_{,\beta|\alpha})\omega^\alpha\wedge\omega^{\bar{\beta}}-(f_{|\alpha}+f_{|n,\alpha}-f_{,\alpha|n})\omega^n\wedge\omega^{\bar{\alpha}}\\
		&-\frac{1}{2}(f_{,\alpha,\beta}-f_{,\beta,\alpha})\omega^{\bar{\alpha}}\wedge\omega^{\bar{\beta}}.
	\end{split}
\end{equation}
Therefore, the coefficients in  (\ref{d2f}) with respect to the local adapted bases of $\Omega^2(SM)$ vanish. The identities (\ref{basic equ 1 d2f})-(\ref{basic equ 5 d2f}) are proved in this way.

Taking covariant derivative of (\ref{basic equ 4 d2f}) along $\mathbf{e}_n$ and substituting in (\ref{basic equ 2 d2f}) yields (\ref{basic equ 6 d2f}).  Taking covariant derivative of (\ref{basic equ 4 d2f}) along $\mathbf{e}_{\bar{\alpha}}$ and substituting in (\ref{basic equ 3 d2f}) yields (\ref{basic equ 7 d2f}).
\end{proof}

Applying Lemma \ref{d2f=0} to the distortion $\tau$ yields the following lemma.
\begin{lem}
The differential of $\tau$ is given by
\begin{equation}
d\tau=\tau_{|i}\omega^i+\eta=\tau_{|\alpha}\omega^\alpha+S\omega^n+\eta.\label{dtau}
\end{equation}
The following identities are valid
	\begin{align}
	&\tau_{|\alpha|\beta}-\tau_{|\beta|\alpha}+R_{i~\alpha\beta}^{~i}=0,\label{basic equ 1}\\
	&S_{|\alpha}-\tau_{|\alpha|n}-R_{i~\alpha n}^{~i}=0,\label{basic equ 2}\\
	&\tau_{|\gamma,\mu}-S\delta_{\gamma\mu}+P_{i~\gamma\mu}^{~i}=0,\label{basic equ 3}\\
	&S_{,\mu}+\tau_{|\mu}+P_{i~n\mu}^{~i}=0.\label{basic equ 4}
\end{align}
Furthermore, one has
\begin{equation}\label{Jacobi I}
  S_{|\alpha}-S_{,\alpha|n}=J_{\alpha|n}+R^{~i}_{i~\alpha n},
\end{equation}
and
\begin{equation}\label{hessian S}
	S_{,\gamma,\mu}+S\delta_{\gamma\mu}=E_{\gamma\mu}.
\end{equation}
\end{lem}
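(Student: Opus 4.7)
My plan is to derive all stated identities from $d^2\tau = 0$ in the adapted coframe, with the key trace identity $\tr\Omega = -d\eta$ as the workhorse. First, for the decomposition (\ref{dtau}): write $d\tau = \tau_{|i}\omega^i + \tau_{,\gamma}\omega^{\bar{\gamma}}$. The identification $\tau_{|n} = \mathbf{G}(\tau) = S$ is tautological, and to see $\tau_{,\gamma}\omega^{\bar{\gamma}} = \eta$ one checks in natural coordinates that $\partial\tau/\partial y^k = \frac{1}{2}g^{ij}\partial_{y^k}g_{ij}$ is the mean Cartan tensor, whose pullback to the adapted frame via the transition matrix $u_\gamma^k$ (and the orthonormality relation $\sum_i u^p_i u^q_i = g^{pq}$) matches the components $H_\gamma = H_{ii\gamma}$ of $\eta$.

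For (\ref{basic equ 1})-(\ref{basic equ 4}), I would first establish $\tr\Omega = -d\eta$ as follows: summing the diagonal of the matrix equation $\bm{\omega}+\bm{\omega}^t = -2H$ from Lemma \ref{sturcture eq} yields $\tr\bm{\omega} = -\eta$, while
\begin{equation*}
\tr(\bm{\omega}\wedge\bm{\omega}) = \sum_{i,j}\omega^i_j\wedge\omega^j_i = 0
\end{equation*}
by antisymmetry of the wedge on 1-forms (pairs $(i,j)$ and $(j,i)$ with $i\neq j$ cancel, diagonal terms vanish). Hence $\tr\Omega = d\tr\bm{\omega} - \tr(\bm{\omega}\wedge\bm{\omega}) = -d\eta$, and expanding $\tr\Omega$ via (\ref{curvature chern connection}) gives a closed-form expression for $d\eta$ in terms of the traced Chern curvatures $R^{~i}_{i~\cdot\cdot}$ and $P^{~i}_{i~\cdot\cdot}$. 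Then compute $0 = d(d\tau) = d(\tau_{|\alpha}\omega^\alpha) + d(S\omega^n) + d\eta$ using the expansions (\ref{dd^JF f}) and (\ref{dd^l f}) of the first two summands (with $f = \tau$, $f_{|n} = S$) combined with the formula for $d\eta$, and extract the coefficients of $\omega^\alpha\wedge\omega^\beta$, $\omega^\alpha\wedge\omega^n$, $\omega^\alpha\wedge\omega^{\bar{\beta}}$, and $\omega^n\wedge\omega^{\bar{\alpha}}$ to read off (\ref{basic equ 1})-(\ref{basic equ 4}) respectively.

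For (\ref{Jacobi I}): rewrite (\ref{basic equ 4}) as $\tau_{|\alpha} = -S_{,\alpha} + J_\alpha$ using $P^{~i}_{i~n\alpha} = -J_\alpha$ (from the definition $\mathbf{J} = -P^{~i}_{i~n\gamma}\omega^{\bar{\gamma}}$), take the $|n$-covariant derivative of this relation, and substitute into (\ref{basic equ 2}) to express $S_{|\alpha}$ in terms of $S_{,\alpha|n}$, $J_{\alpha|n}$, and $R^{~i}_{i~\alpha n}$. For (\ref{hessian S}): take the vertical $,\mu$-derivative of (\ref{basic equ 4}), substitute (\ref{basic equ 3}) to replace $\tau_{|\gamma,\mu}$ by $S\delta_{\gamma\mu} - P^{~i}_{i~\gamma\mu}$, and invoke (\ref{coefficients E}) to recognize $P^{~i}_{i~\gamma\mu} + J_{\gamma,\mu} = E_{\gamma\mu}$. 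The main technical hurdle will be pinning down the trace identity $\tr\Omega = -d\eta$ cleanly, together with a careful tracking of signs when commuting covariant derivatives; once these are in place, the remaining work is routine index bookkeeping already packaged within Lemma \ref{d2f=0}.
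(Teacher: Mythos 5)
Your proposal is correct and follows essentially the same route as the paper: the paper's proof simply applies Lemma \ref{d2f=0} to $f=\tau$ (so that $\tau_{,\gamma}=H_\gamma$, $\tau_{|n}=S$) and converts the terms $H_\gamma R^{~\gamma}_{n~\alpha\beta}$, $H_\gamma R^{~\gamma}_{n~\alpha n}$, $H_\gamma P^{~\gamma}_{n~\alpha\beta}-H_{\beta|\alpha}$, $H_{\alpha|n}$ into traced curvatures via the identity (\ref{deta}), which is exactly your $\tr\Omega=-d\eta$ computed from $\tr\bm{\omega}=-\eta$ and $\tr(\bm{\omega}\wedge\bm{\omega})=0$; your derivations of (\ref{Jacobi I}) and (\ref{hessian S}) by differentiating (\ref{basic equ 4}) and substituting (\ref{basic equ 2}), resp.\ (\ref{basic equ 3}), are precisely the paper's equations (\ref{basic equ 6 d2f}) and (\ref{basic equ 7 d2f}) specialized to $\tau$. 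One caveat: carried out carefully, this derivation yields $S_{|\alpha}+S_{,\alpha|n}=J_{\alpha|n}+R^{~i}_{i~\alpha n}$ (consistent with (\ref{basic equ 6 d2f}) and with how the identity is actually used in (\ref{pidxi2 WIS})), so the minus sign in the stated (\ref{Jacobi I}) appears to be a typo rather than a defect of your argument.
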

\begin{proof}
	These identities (\ref{basic equ 1})-(\ref{hessian S}) are direct consequences of Lemma \ref{d2f=0} by using (\ref{dtau}),  (\ref{coefficients E}) and some well known Bianchi identities or the following fact
	\begin{equation}\label{deta}
		d\eta=d(H_{\gamma}\omega^{\bar{\gamma}})=-\frac{1}{2}R_{i~kl}^{~i}\omega^k\wedge\omega^l-P_{i~k\gamma}^{~i}\omega^k\wedge\omega^{\bar{\gamma}}.
	\end{equation}
\end{proof}

Let $\xi\in\Omega^1(M)$ be a 1-form. In a local coordinate system, $\xi=\xi_i(x)dx^i$. Let
\begin{equation}\label{dx=omega}
	\pi^*dx^i=u^i_j\omega^j=u^i_\alpha\omega^\alpha+\frac{y^i}{F}\omega^n
\end{equation}
be the relationship between the natural bases and the orthonormal ones. By using (\ref{dx=omega}), $\pi^*\xi$ can be written as
\begin{equation}\label{pixi}
	\pi^*\xi=\xi_iu^i_\alpha\omega^\alpha+\frac{\xi_iy^i}{F}\omega^n=:\xi_\alpha\omega^\alpha+\xi_n\omega^n.
\end{equation}

The following lemma gives a representation of $\pi^*d\xi$ on $SM$.
\begin{lem}
	For any $\xi\in\Omega^1(M)$, one can express $\pi^*d\xi$ in the following form
	\begin{equation}\label{dxi}
		d\pi^*\xi=-\frac{1}{2}(\xi_{\alpha|\beta}-\xi_{\beta|\alpha})\omega^\alpha\wedge\omega^\beta-(\xi_{\alpha|n}-\xi_{n|\alpha})\omega^\alpha\wedge\omega^n.
	\end{equation}
Moreover, one has
\begin{equation}\label{aha}
	\xi_{\alpha,\beta}-\xi_{n}\delta_{\alpha\beta}=0.
\end{equation}
\end{lem}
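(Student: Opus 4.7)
The plan is to compute $d\pi^*\xi$ directly, using the structure equation of the Chern connection from Lemma~\ref{sturcture eq}. Writing $\pi^*\xi=\xi_k\omega^k$ with $k$ ranging over $1,\ldots,n$ and $\xi_\alpha,\xi_n$ as in (\ref{pixi}), I would apply $d\omega^k=-\omega^k_j\wedge\omega^j$ and relabel dummies to obtain
\begin{equation*}
d\pi^*\xi=(d\xi_k-\xi_j\omega^j_k)\wedge\omega^k.
\end{equation*}
This packages the expression so that the Chern covariant differential of $\xi$, viewed as a horizontal covector, appears naturally; the remainder is to unpack the result in the adapted coframe.

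Next I would decompose each 1-form $d\xi_k-\xi_j\omega^j_k$ into its $\omega^\beta$, $\omega^n$, and $\omega^{\bar\gamma}$ components and wedge with $\omega^k$, separating the cases $k=\beta$ and $k=n$ and using $\omega^n_\alpha=\omega^{\bar\alpha}$, $\omega^\alpha_n=-\omega^{\bar\alpha}$, and $\omega^n_n=0$ from Lemma~\ref{sturcture eq}. For the purely horizontal wedges $\omega^\alpha\wedge\omega^\beta$ and $\omega^\alpha\wedge\omega^n$, the symmetry of the horizontal Chern Christoffel coefficients (torsion-freeness of the Chern connection in horizontal directions) kills the contribution from $\xi_k\,d\omega^k$ upon antisymmetrization, leaving exactly $-\tfrac{1}{2}(\xi_{\alpha|\beta}-\xi_{\beta|\alpha})\omega^\alpha\wedge\omega^\beta$ and $-(\xi_{\alpha|n}-\xi_{n|\alpha})\omega^\alpha\wedge\omega^n$. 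This establishes (\ref{dxi}).

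For (\ref{aha}), the key observation is that $d\pi^*\xi=\pi^*d\xi$ is the pullback of a 2-form on $M$ and hence contains no wedge involving a vertical coframe $\omega^{\bar\gamma}$. Collecting the $\omega^\alpha\wedge\omega^{\bar\gamma}$ coefficient from the decomposition produces $(\xi_{\alpha,\gamma}-\xi_n\delta_{\alpha\gamma})\,\omega^{\bar\gamma}\wedge\omega^\alpha=0$, where $\xi_{\alpha,\gamma}$ is the comma derivative defined via $\bar\nabla^{\rm Ch}$ as in (\ref{covar ch bar ch}), and the term $-\xi_n\delta_{\alpha\gamma}$ arises from the purely vertical connection form $\omega^n_\alpha=\omega^{\bar\alpha}$ acting on $\xi_n$. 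Linear independence of the $\omega^{\bar\gamma}\wedge\omega^\alpha$ then yields (\ref{aha}). The main subtlety is a notational one: the comma derivative $\xi_{\alpha,\gamma}$ must be read via $\bar\nabla^{\rm Ch}$ rather than as a naive vertical frame derivative of the function $\xi_\alpha$, so that the extra $\xi_n\delta_{\alpha\gamma}$ emerges structurally from $\omega^n_\alpha=\omega^{\bar\alpha}$.
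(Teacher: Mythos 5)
Your proposal is correct and follows essentially the same route as the paper: expand $d(\xi_k\omega^k)$ via the structure equations of the Chern connection, identify the horizontal part as the antisymmetrized covariant derivative, and kill every term involving a vertical coframe by observing that $\pi^*d\xi$ is a pullback from $M$. The only cosmetic difference is that the paper also verifies $\xi_{n,\alpha}=-\xi_\alpha$ by an explicit fibre computation (it needs that identity later), whereas you obtain the vanishing of all $\omega^{\bar\gamma}$-terms, including that one, directly from the pullback argument.
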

\begin{proof}
	For any sections $\xi_{\alpha}\omega^\alpha\in\Gamma(J\mathscr{F}^*)$ and $\xi_{n}\omega^n\in\Gamma(\mathfrak{l}^*)$, one has
		\begin{equation}\label{d^JF xi}
		\begin{split}
			&d(\xi_{\alpha}\omega^\alpha)=(d(\xi_{\alpha})-\xi_{\beta}\omega^\beta_\alpha)\wedge\omega^\alpha-\xi_{\beta}\omega^\beta_n\wedge\omega^n\\
			=&\xi_{\alpha|\beta}\omega^\beta\wedge\omega^\alpha+\xi_{\alpha|n}\omega^n\wedge\omega^\alpha+\xi_{\alpha,\beta}\omega^{\bar{\beta}}\wedge\omega^\alpha+\xi_{\alpha}\omega^{\bar{\alpha}}\wedge\omega^n\\
			=&-\frac{1}{2}(\xi_{\alpha|\beta}-\xi_{\beta|\alpha})\omega^\alpha\wedge\omega^\beta-\xi_{\alpha|n}\omega^\alpha\wedge\omega^n-\xi_{\alpha,\beta}\omega^\alpha\wedge\omega^{\bar{\beta}}-\xi_{\alpha}\omega^n\wedge\omega^{\bar{\alpha}},
		\end{split}
	\end{equation}
and
	\begin{equation}\label{d^l xi}
		\begin{split}
			&d(\xi_{n}\omega^n)=\xi_{n|\alpha}\omega^\alpha\wedge\omega^n+\xi_{n,\alpha}\omega^{\bar{\alpha}}\wedge\omega^n+\xi_{n}\delta_{\alpha\beta}\omega^\alpha\wedge\omega^{\bar{\beta}}\\
			=&\xi_{n|\alpha}\omega^\alpha\wedge\omega^n+\xi_{n}\delta_{\alpha\beta}\omega^\alpha\wedge\omega^{\bar{\beta}}-\xi_{n,\alpha}\omega^n\wedge\omega^{\bar{\alpha}}.
		\end{split}
	\end{equation}
When $\xi\in\Omega^1(M)$, the derivative of $\xi_n$ along the direction $\mathbf{e}_{\bar{\alpha}}=-u^i_\alpha F\frac{\partial}{\partial y^i}$ is given by
\begin{equation}\label{dxin-xialpha}
	\xi_{n,\alpha}=\mathbf{e}_{\bar{\alpha}}(\xi_n)=-u^j_\alpha F\frac{\partial}{\partial y^j}(\frac{\xi_iy^i}{F})=-u^j_\alpha (\delta_j^i-\frac{F_{y^j}y^i}{F})\xi_i=-\xi_iu^i_\alpha=-\xi_\alpha.
\end{equation}
Thus the facts (\ref{d^JF xi})-(\ref{dxin-xialpha}) yield
\begin{equation}\label{dxi'}
	d\pi^*\xi=-\frac{1}{2}(\xi_{\alpha|\beta}-\xi_{\beta|\alpha})\omega^\alpha\wedge\omega^\beta-(\xi_{\alpha|n}-\xi_{n|\alpha})\omega^\alpha\wedge\omega^n-(\xi_{\alpha,\beta}-\xi_{n}\delta_{\alpha\beta})\omega^\alpha\wedge\omega^{\bar{\beta}}.
\end{equation}
Since $d\pi^*\xi=\pi^*d\xi$ does not involve vertical differential forms, we obtain the desired formulae from  (\ref{dxi'}).
\end{proof}

Now we present a proof of the theorems and corollaries.
\begin{proof}[Proof of Theorem \ref{theo 1}]
  The necessary part of the theorem has been proved in \cite{Lizhang}. The key ingredient is that along each fibre of $SM$, the elliptic equation
  \begin{equation*}
  	\Delta_{\dot{g}}f+\dot{g}(\eta,d^{SM/M}f)+(n-1)f=0,
  \end{equation*}
has only linear solutions (c.f.\cite{Schneider1,NS}). By using this fact and (\ref{hessian S}), in \cite{Lizhang}, we prove that if $\mathsf{e}$ is constant along each fiber of $SM$, then
$S$ is weakly isotropic and
 \begin{equation}
	S=\frac{1}{n-1}\mathsf{e}+\xi_i\frac{y^i}{F},
\end{equation}
where $\xi=\xi_i(x)dx^i\in\Omega^1(M)$.

Now we prove the sufficient part. Assume that the $S$-curvature is weakly isotropic and satisfies (\ref{WIS}) for some $c\in C^{\infty}(M)$ and $\xi\in\Omega^1(M)$.
The condition (\ref{WIS}) and the representation (\ref{pixi}) of $\xi$ shows that
\begin{equation}\label{xin WIS}
	\xi_n=S-\frac{1}{n-1}\pi^*c.
\end{equation}
By taking derivative of (\ref{xin WIS}) along the fibres, the formula (\ref{dxin-xialpha}) implies that
\begin{equation}\label{xialpha WIS}
	\xi_{\alpha}=-S_{,\alpha}.
\end{equation}
Substituting (\ref{xialpha WIS}) and (\ref{xin WIS}) in the formula (\ref{aha}) yields
\begin{equation}\label{aha WIS}
	S_{,\alpha,\beta}+(S-\frac{1}{n-1}\pi^*c)\delta_{\alpha\beta}=0.
\end{equation}
By (\ref{hessian S}) and (\ref{aha WIS}), one obtains that the $\mathbf{E}$-curvature is isotropic, i.e.,
\begin{equation}\label{E iso}
	E_{\alpha\beta}=\frac{1}{n-1}\pi^*c\delta_{\alpha\beta}.
\end{equation}
The equation (\ref{E iso}) obviously implies  the Berwald scalar curvature
\begin{equation}\label{e=pic}
  \mathsf{e}=\pi^*c.
\end{equation}

Now applying (\ref{xialpha WIS}),(\ref{xin WIS}) and (\ref{e=pic}) to the equation (\ref{dxi}) yields
\begin{equation}\label{pidxi WIS}
\pi^*d\xi=\frac{1}{2}(S_{,\alpha|\beta}-S_{,\beta|\alpha})\omega^\alpha\wedge\omega^\beta+(S_{,\alpha|n}+(S-\frac{1}{n-1}\mathsf{e})_{|\alpha})\omega^\alpha\wedge\omega^n.
\end{equation}

Applying (\ref{basic equ 1}), (\ref{basic equ 2}), (\ref{basic equ 4}) and (\ref{Jacobi I}) to (\ref{pidxi WIS}), we obtain
\begin{equation}\label{pidxi2 WIS}
	\begin{split}
\pi^*d\xi=&-\frac{1}{2}(\tau_{|\alpha|\beta}-\tau_{|\beta|\alpha})\omega^\alpha\wedge\omega^\beta+\frac{1}{2}(J_{\alpha|\beta}-J_{\beta|\alpha})\omega^\alpha\wedge\omega^\beta\\
	& +(S_{|\alpha}-\tau_{|\alpha|n}+J_{\alpha|n})\omega^\alpha\wedge\omega^n-\frac{1}{n-1}d\mathsf{e}\wedge\omega^n\\ =&\frac{1}{2}R^{~i}_{i~\alpha\beta}\omega^\alpha\wedge\omega^\beta+\frac{1}{2}(J_{\alpha|\beta}-J_{\beta|\alpha})\omega^\alpha\wedge\omega^\beta\\
	& +(R^{~i}_{i~\alpha n}+J_{\alpha|n})\omega^\alpha\wedge\omega^n-\frac{1}{n-1}d\mathsf{e}\wedge\omega^n.
	\end{split}
\end{equation}
The relations (\ref{hh curvature Berwald by Chern}) between the \textit{hh}-curvature forms of the Berwald connection and the Chern connection  and  (\ref{pidxi2 WIS}) yields
\begin{equation}\label{pidxi=r+s+de}
	\begin{split}
		\pi^*d\xi=\tr[\tilde{R}]-\frac{1}{n-1}d\mathsf{e}\wedge\omega^n
		=\tr[R]+\frac{1}{2}\bar{\Sigma}-\frac{1}{n-1}d\mathsf{e}\wedge\omega^n,
	\end{split}	
\end{equation}
where $\bar{\Sigma}$ is the mean stretch curvature (\ref{msc}).
\end{proof}

\begin{proof}[The proof of Theorem 2]
	Assume that the Finsler metric $F$ satisfies $d\mathsf{e}=0$ and $\bar{\Sigma}=0$. From (\ref{pidxi=r+s+de}) we obtain
	\begin{equation}\label{pidxi=r}
		\begin{split}
			\pi^*d\xi=\tr[R]=\frac{1}{2}H_{\gamma}R^{~\gamma}_{n~ab}\omega^a\wedge\omega^b.
		\end{split}
	\end{equation}
Since that $\eta=d^{SM/M}\tau$ and the fibres of $SM$ are compact, $\eta$ has zero points on each fibre of $SM$. However $\pi^*d\xi$ is constant along each fibre. We conclude that the both sides of the equation (\ref{pidxi=r}) vanish. Therefore $d\xi=0$  as $\pi$ is surjective. And $\tr[R]=0$.
\end{proof}

\begin{proof}[The proof of Corollary \ref{cor1}]
	We are going to prove the chain of statements: (1)$\implies$(2)$\implies$(3)$\implies$(1).
	
	(1)$\implies$(2): By the assumption $d\mathsf{e}=0$ and $\mathbf{J}=0$, a similar argument as the proof of Theorem \ref{theo 2} shows that the $S$-curvature is almost constant
	\begin{equation}\label{S ac}
		S=\frac{1}{n-1}\mathsf{e}+\pi^*\xi(\mathbf{G}),
	\end{equation}
with $d\xi=0$.

    (2)$\implies$(3): Assume that $S$ is given by (\ref{S ac}) with $d\mathsf{e}=d\xi=0$. The hypothesis $\mathbf{J}=0$ and (\ref{pidxi=r+s+de}) yields $\tr[R]=0$. By Theorem \ref{theo 1}, the hypothesis $\mathbf{J}=0$ and (\ref{deta}), we obtain $d\eta=-\tr[P]=-\mathbf{E}=-\mathsf{e}d\omega^n$.
      	
   (3)$\implies$(1): By (\ref{deta}), the proof is clear.
\end{proof}

\begin{proof}[The proof of Corollary \ref{cor3}]
	Assume that $F$ is of scalar curvature, the following formula (c.f. \cite{Mo}) is well known
	\begin{equation}\label{Jacobi 2}
		R^{~i}_{i~\alpha n}+J_{\alpha|n}=-\frac{n+1}{3}\mathbf{K}_{,\alpha}.
	\end{equation}

	Assume that $F$ has almost isotropic $S$-curvature. By (\ref{Jacobi 2}) and (\ref{pidxi2 WIS}), one obtains
	\begin{equation}\label{cor31}
		0=\frac{n+1}{3}\mathbf{K}_{,\alpha}\omega^\alpha\wedge\omega^n+\frac{1}{n-1}d\mathsf{e}\wedge\omega^n.
	\end{equation}
Since $\mathsf{e}$ is a function on $M$, (\ref{cor31}) and (\ref{basic equ 4 d2f}) yields
\begin{equation}
0=(\frac{n+1}{3}\mathbf{K}-\frac{1}{n-1}\mathsf{e}_{|n})_{,\alpha}\omega^\alpha\wedge\omega^n.
\end{equation}
Therefore $\frac{n+1}{3}\mathbf{K}-\frac{1}{n-1}d\mathsf{e}(\mathbf{G})=\pi^*\sigma$, for some $\sigma\in C^{\infty}(M)$.

Conversely,  $F$ has weakly isotropic flag curvature if and only if
\begin{equation}\label{cor32}
	-\frac{n+1}{3}\mathbf{K}_{,\alpha}=-\frac{1}{n-1}c_{|n,\alpha}=\frac{1}{n-1}c_{|\alpha},
\end{equation}
where $c\in C^{\infty}(M)$. Assume that $F$ has weakly isotropic $S$-curvature, (\ref{pidxi2 WIS}) and (\ref{cor32}) implies that
\begin{equation}\label{cor33}
	\begin{split} \pi^*d\xi =&\frac{1}{2}(R^{~i}_{i~\alpha\beta}+(J_{\alpha|\beta}-J_{\beta|\alpha}))\omega^\alpha\wedge\omega^\beta
		 +\frac{1}{n-1}d(c-\mathsf{e})\wedge\omega^n\\
	=&\frac{1}{2}\tilde{R}^{~i}_{i~\alpha\beta}\omega^\alpha\wedge\omega^\beta
	+\frac{1}{n-1}d(c-\mathsf{e})\wedge\omega^n.
	\end{split}
\end{equation}
Taking derivative of (\ref{cor33}), one obtains
\begin{equation*}
	\begin{split}
		0=&\frac{1}{2}(d\tilde{R}^{~i}_{i~\alpha\beta}\wedge\omega^\alpha\wedge\omega^\beta+\tilde{R}^{~i}_{i~\alpha\beta}d\omega^\alpha\wedge\omega^\beta-\tilde{R}^{~i}_{i~\alpha\beta}\omega^\alpha\wedge d\omega^\beta)\\
		&+\frac{1}{n-1}d(c-\mathsf{e})\wedge \sum_{\alpha}(\omega^\alpha\wedge\omega^{\bar{\alpha}})\\
		=&\frac{1}{2}(d\tilde{R}^{~i}_{i~\alpha\beta}\wedge\omega^\alpha\wedge\omega^\beta+\tilde{R}^{~i}_{i~\alpha\beta}\omega^\gamma\wedge\omega_\gamma^\alpha\wedge\omega^\beta-\tilde{R}^{~i}_{i~\alpha\beta}\omega^\alpha\wedge \omega^\gamma\wedge\omega_\gamma^\beta)\\
		&+(\tilde{R}^{~i}_{i~\alpha\beta}\omega^n\wedge\omega_n^\alpha\wedge\omega^\beta-\tilde{R}^{~i}_{i~\alpha\beta}\omega^\alpha\wedge \omega^n\wedge\omega_n^\beta)\\
		&+\frac{1}{n-1}[(c-\mathsf{e})_{|\beta}\omega^\beta+(c-\mathsf{e})_{|n}\omega^n]\wedge \sum_{\alpha}(\omega^\alpha\wedge\omega^{\bar{\alpha}})\\
				=&\frac{1}{2}(\tilde{R}^{~i}_{j~\alpha\beta|i}\omega^j+\tilde{R}^{~i}_{i~\alpha\beta,\gamma}\omega^{\bar{\gamma}})\wedge\omega^\alpha\wedge\omega^\beta
		-2\tilde{R}^{~i}_{i~\alpha\beta}\omega^n\wedge\omega^\alpha\wedge \omega^{\bar{\beta}}\\
		&+\frac{1}{n-1}[(c-\mathsf{e})_{|\beta}\omega^\beta+(c-\mathsf{e})_{|n}\omega^n]\wedge \sum_{\alpha}(\omega^\alpha\wedge\omega^{\bar{\alpha}}),\\
	\end{split}
\end{equation*}
therefore
\begin{equation}\label{cor34}
	2\tilde{R}^{~i}_{i~\alpha\beta}\omega^n\wedge\omega^\alpha\wedge \omega^{\bar{\beta}}=\frac{1}{n-1}(c-\mathsf{e})_{|n}\omega^n\wedge \sum_{\alpha}(\omega^\alpha\wedge\omega^{\bar{\alpha}}).
\end{equation}
Since $\tilde{R}^{~i}_{i~\alpha\beta}=-\tilde{R}^{~i}_{i~\beta\alpha}$, both sides of (\ref{cor34}) vanish.
By this fact, (\ref{cor33}) is simplified as
	\begin{equation}
		\pi^*d\xi=-\frac{1}{n-1}d(c-\mathsf{e})\wedge\omega^n.
	\end{equation}
Hence
	\begin{equation}
		0=d(c-\mathsf{e})\wedge(\omega^1\wedge\omega^{n+1}+\cdots+\omega^{n-1}\wedge\omega^{2n-1}),
	\end{equation}
which is equivalent to the following system
	\begin{align*}
		0&=d(c-\mathsf{e})\wedge\omega^1\wedge\omega^{n+1},\\
		&\cdots\cdots\\
		0&=d(c-\mathsf{e})\wedge\omega^{n-1}\wedge\omega^{2n-1}.
	\end{align*}
Therefore $d(c-\mathsf{e})=0$ and thus $d\xi=0$.
	
\end{proof}


\begin{thebibliography}{99}












\bibitem{BaoChernShen} David Bao, Shiing-Shen Chern and Zhongmin Shen,
\textsl{An Introduction to Riemann-Finsler Geometry.} Graduate Texts
in Mathematics, Vol. 200, Springer-Verlag, New York, Inc., 2000.

\bibitem{Blai} David E. Blair, \textsl{Riemannian Geometry of Contact and Symplectic Manifolds.} (2nd Edition). Birkh\"{a}user, New York, 2010.








\bibitem{ChMS} Xinyue Chen(g), Xiaohuan Mo and Zhongmin Shen, \textsl{On the flag curvature of Finsler metrics of scalar curvature.} J. of London Math. Soc., Vol. 68, No. 2, 2003: 762-780.

\bibitem{ChengShen} Xinyue Cheng and Zhongmin Shen, \textsl{Finsler Geometry-an approach via Randers spaces.} Science Press Beijing and Springer-Verlag Berlin Heidelberg, 2012.


\bibitem{ChernShen} Shiing-Shen Chern and Zhongmin Shen, \textsl{Riemann-Finsler Geometry.} Nankai Tracts in Mathematics, Vol. 6, World Scientific, 2005.






\bibitem{FL} Huitao Feng and Ming Li, \textsl{Adiabatic limit and connections in Finsler geometry.}
Communications in Analysis and Geometry, Vol. 21, No. 3, 2013: 607-624.














\bibitem{LSZ}Anmin Li, Udo Simon and Guosong Zhao, \textsl{Global Affine Differential Geometry of Hypersurfaces.} W. de Gruyter, Berlin-New York, 1993.












\bibitem{Li} Ming Li, \textsl{Equivalence theorems of Minkowski spaces and applications in Finsler geometry.}(in Chinese) Acta Math. Sinica (Chin. Ser.) Vol. 62, No.2, 2019: 177-190. (see arXiv:1504.04475v2  for the English version.)






\bibitem{Lizhang} Ming Li and Lihong Zhang, \textsl{Properties of Berwald scalar curvature.} Front. Math. China, 15(6), 2020: 1143-1153.










\bibitem{Mo} Xiaohuan Mo, \textsl{An Introduction to Finsler Geometry.} Peking University series in Math., Vol. 1, World Scientific Publishing Co. Pte. Ltd., 2006.




\bibitem{NajafiTa} Behzad Najafi and Akbar Tayebi, \textsl{Weakly stretch Finsler metrics.} Publ. Math. Debrecen, Vol. 91, 2017: 1-12.

\bibitem{NS} Katsumi Nomizu and Takeshi Sasaki, \textsl{Affine Differential Geometry.} Cambridge University Press, 1994.



\bibitem{Schneider1} Rolf Schneider, \textsl{Zur affinen Differentialgeometrie im Gro{\ss}en. I.} Math. Zeitschr., Vol. 101, 1967: 375-406.






\bibitem{SS} Yibing Shen and  Zhongmin Shen, \textsl{Introduction to Modern Finsler Geometry.} World Scientific, Singapore, 2016.

\bibitem{shen97} Zhongmin Shen, \textsl{Volume comparison and its applications in Riemann-Finsler geometry.} Adv. in Math., Vol. 128, 1997: 306-328.

\bibitem{Shenbook1}Zhongmin Shen, \textsl{Differential Geometry of Spray and Finsler
Spaces.} Kluwer Acad. Publ., 2001.

\bibitem{Shenbook2}Zhongmin Shen, \textsl{Lectures on Finsler Geometry.} World Scientific, 2001.








\bibitem{SSV} Udo Simon, Angela Schwenk-Schellschmidt and Helmut Viesel, \textsl{Introduction to the Affine Differential Geometry of Hypersurfaces.}
Lecture notes, Science University Tokyo, 1991. 


















\bibitem{Zhang} Weiping Zhang, \textsl{Lectures on Chern-Weil Theory and Witten Deformations}. Nankai Tracts in Mathematics, Vol. 4, World Scientific Publishing Co. Pte. Ltd., 2001.





\end{thebibliography}
\end{document}